\nonstopmode \numberwithin{equation}{section}
\newtheorem{theorem}{Theorem}[section]
\newtheorem{remark}{Remark}[section]
\newtheorem{lemma}{Lemma}[section]
\newtheorem{corollary}{Corollary}[section]
\newtheorem{example}{Example}[section]
\begin{document}
\date{}
\title{Geometric properties of Ces\`aro averaging operators }
\author{
Priyanka Sangal$^{\ast}$}
\thanks{$^{\ast}$Corresponding author}
\address{Department of Mathematics, IIT Roorkee}
\email{sangal.priyanka@gmail.com, priyadma@iitr.ac.in}

\author{A. Swaminathan}
\address{
Department of  Mathematics  \\
Indian Institute of Technology, Roorkee-247 667,
Uttarkhand,  India
}
\email{swamifma@iitr.ac.in, mathswami@gmail.com}

\bigskip

\maketitle
\begin{abstract}
In this paper, using positivity of trigonometric cosine and sine sums
whose coefficients are generalization of Vietoris numbers, we find the conditions on
the coefficient $\{a_k\}$ to characterize the geometric properties of the corresponding
analytic function $f(z)=z+\displaystyle\sum_{k=2}^{\infty} a_kz^k$ in the unit disc $\mathbb{D}$.
As an application we also find geometric properties of a generalized Ces\`aro type polynomials.
\end{abstract}

2010 Mathematics Subject Classification: {Primary 30C45; Secondary 00A30}

\keywords{Keywords: Trigonometric sums, Starlike function, Close-to-convex function, Convex function.}

\pagestyle{myheadings}\markboth{Priyanka Sangal and A. Swaminathan}
{Geometric properties of Ces\`aro averaging operators}

\section{introduction}
Inequalities involving trigonometric sums arise naturally in various
problems of pure and applied mathematics. Inequalities that assure
nonnegativity or boundedness of partial sums of trigonometric series
are of particular interest and applications in various fields. For example,
the positivity of trigonometric polynomials are studied in geometric function theory
by Gluchoff and Hartman \cite{gluchoff-hartman-1998-AMS}
and Ruscheweyh and Salinas \cite{ruscheweyh-salinas-2004-stable-JMAA}.
For a detailed application in signal processing, we refer to the monograph of
Dumitrescu \cite{signal-processing-book}.
For other applications in this direction we refer to Dimitrov and Merlo
\cite{dimitrov-merlo-2002-cons-approx}, Fernandez-Duran \cite{Fernandez-2004},
Gasper \cite{gasper-1969-nonneg-sum-JMAA}.
The positive trigonometric polynomials played important role in the proof of Bieberbach
conjecture, see \cite{askey-gasper-biebarbach-conj-AMS-1986}.
For the applications of positive trigonometric polynomials in Fourier series,
Approximation theory, Function Theory and Number Theory, we refer to the work
of Dimitrov \cite{dimitrov-extremal-trig-pol-2002} and references therein. For the study of
extremal problems we refer to the dissertation of Revesz \cite{revesz-thesis}
wherein several applications are outlined.

The problem of finding the behaviour of the coefficients to validate the
positivity of trigonometric sum has been
dealt by many researchers. Among them the contributions of
Vietoris \cite{vietoris-1958} followed by Koumandos \cite{koumandos-2007-ext-viet-ramanujan}
are of interest to the present investigation. Precisely Vietoris \cite{vietoris-1958} gave sufficient
conditions on the coefficient of a general class of sine and
cosine sums that ensure their positivity in $(0,\pi)$.
For further details in this direction one can refer to
\cite{brown-dai-wang-2007-ext-viet-ramanujan,koumandos-2007-ext-viet-ramanujan,
sangal-swaminathan-positivity-alpha-beta} and the references therein.
An account of recent results available in this direction is given in
\cite{sangal-swaminathan-positivity-alpha-beta} and one of the
main result in \cite{sangal-swaminathan-positivity-alpha-beta} is the following.
\begin{theorem}\cite{sangal-swaminathan-positivity-alpha-beta}
\label{thm:new-alpha-beta-positive-sum}
Suppose that $\alpha\geq0,\beta\geq 0$, and $\lambda,\mu \geq 0$ such that
$\lambda+\mu\geq 1$ then for  $b_0=2,b_1=1$ and
$b_k=\frac{1}{(k+\alpha)^{\lambda}(k+\beta)^{\mu}}, k\geq2$,
we have,
\begin{align*}
\frac{b_0}{2}+\sum_{k=1}^nb_k \cos{k \theta} &>0\quad \mbox{and }
\quad \sum_{k=1}^nb_k \sin{k \theta} >0,
\end{align*}
for $0< \theta<\pi$ and $n\in\mathbb{N}$.
\end{theorem}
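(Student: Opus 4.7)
The plan is to prove both positivity statements by a single summation-by-parts argument that reduces them to the convexity of the sequence $\{b_k\}$. Applying Abel summation twice to the general series $\sum_{k=1}^n c_k \cos k\theta$ gives
$$\sum_{k=1}^n c_k \cos k\theta = \sum_{k=1}^{n-2}\Delta^2 c_k \cdot F_k(\theta) + B_n(\theta),$$
where $F_k(\theta)=\sum_{j=0}^{k-1}D_j(\theta)$ is a Fej\'er-type average of Dirichlet kernels (nonnegative on $(0,\pi)$), $\Delta^2 c_k = c_k - 2c_{k+1} + c_{k+2}$, and $B_n(\theta)$ collects boundary contributions. The first task is thus to verify that $\{b_k\}_{k\geq 2}$ is convex in $k$, which follows by elementary calculus from the explicit form $b_k=(k+\alpha)^{-\lambda}(k+\beta)^{-\mu}$ together with $\lambda,\mu\geq 0$. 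Once convexity is in hand, the bulk sum is nonnegative and everything reduces to controlling $B_n(\theta)$.

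For the sine sum the same strategy applies verbatim, with cosine kernels replaced by conjugate sine kernels $\widetilde D_j(\theta)$ and their Fej\'er-type averages $\widetilde F_k(\theta)$, which are again nonnegative on $(0,\pi)$. The choice $b_0=2$, $b_1=1$ is calibrated precisely so that the terms of $B_n(\theta)$ produced at $k=0$ and $k=1$ combine with factors like $1-\cos\theta$ or $\sin(\theta/2)$ into nonnegative expressions. The assumption $\lambda+\mu\geq 1$ enters at exactly this point: it quantifies how rapidly $b_k-b_{k+1}$ decays, which is what makes the telescoping boundary contributions at the lower and upper ends dominate correctly. I expect the sine case to require slightly more delicate bookkeeping because the $k=1$ term is the first nontrivial one (there is no $b_0/2$ to absorb into), and so the interaction between $b_1=1$ and the tail is the tightest constraint.

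The main obstacle will be a careful accounting of the boundary terms, particularly near $\theta=0$ and $\theta=\pi$ where the kernels degenerate, and checking that the combination of $b_0=2$, $b_1=1$, $b_n$, $b_{n+1}$ gives a manifestly nonnegative remainder under the sole hypothesis $\lambda+\mu\geq 1$. If the direct boundary analysis proves too rigid for the full parameter range, I would fall back on the Gamma integral representation
$$b_k = \frac{1}{\Gamma(\lambda)\Gamma(\mu)}\int_0^\infty\!\int_0^\infty s^{\lambda-1}t^{\mu-1}e^{-\alpha s-\beta t}\,e^{-k(s+t)}\,ds\,dt,$$
which exhibits $b_k$ as a positive weighted average of geometric sequences $r^k$ with $r=e^{-(s+t)}\in(0,1)$. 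Interchanging sum and integral then reduces the problem to a positivity statement for partial sums of Poisson-type series; the latter can be attacked either by induction on $n$ or, following the Vietoris--Koumandos tradition, by identifying these Poisson partial sums as real parts of explicit rational functions with positive real part on the unit circle. Either route gives the theorem, but I would first try the direct Abel route since it makes the role of convexity transparent and reveals why $\lambda+\mu\geq 1$ is the sharp threshold.
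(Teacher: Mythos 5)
There is a genuine gap, and it sits exactly where you have relegated the work to ``bookkeeping'': the remainder of the double Abel summation is not a boundary nuisance created at $k=0,1$, but the entire content of the theorem. Summing by parts twice gives
\[
\frac{b_0}{2}+\sum_{k=1}^{n}b_k\cos k\theta
=\sum_{k=0}^{n-2}\Delta^2 b_k\,\widehat F_k(\theta)
+\Delta b_{n-1}\,\widehat F_{n-1}(\theta)+b_n D_n(\theta),
\qquad
\widehat F_k(\theta)=\frac{\sin^2\bigl((k+1)\theta/2\bigr)}{2\sin^2(\theta/2)}\ge 0,
\]
so convexity and monotonicity (which you correctly verify) make the first two terms nonnegative, but the last term is a full Dirichlet kernel $D_n(\theta)=\sin\bigl((n+\tfrac12)\theta\bigr)/\bigl(2\sin(\theta/2)\bigr)$ weighted by $b_n$, and it is not sign-definite. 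Convexity plus monotonicity cannot absorb it: the constant sequence $c_k\equiv 1$ is convex and nonincreasing, yet its partial cosine sum \emph{is} $D_n(\theta)$, which changes sign on $(0,\pi)$. So the theorem requires a quantitative lower bound on the bulk sum that beats $b_n\lvert D_n(\theta)\rvert$ on the negative lobes of $D_n$, and nothing in the proposal supplies it. The sine sum is even more delicate: at $\theta=\pi-\varepsilon$ the conjugate kernels and their Fej\'er averages all degenerate, the whole sum equals $\varepsilon\sum_{k\le n}(-1)^{k+1}kb_k+O(\varepsilon^3)$, and in the admissible case $\alpha=\beta=0$, $\lambda+\mu=1$ one has $kb_k\equiv 1$, so the leading coefficient is $0$ for $n$ even and positivity is decided only at order $\varepsilon^3$ --- this is exactly the borderline Fej\'er--Jackson situation, which no amount of boundary accounting in your decomposition resolves. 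Your fallback fares no better: $\sum_{k=1}^{n}r^k\sin k\theta$ is \emph{not} positive for all $r\in(0,1)$, $\theta\in(0,\pi)$ (take $n=2$, $r=0.9$, $\theta=3\pi/4$, which gives about $-0.17$), so the Gamma-integral representation merely trades one nontrivial positivity problem for a pointwise statement that is false and would again require exploiting the weight globally.

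For comparison: this theorem is quoted from the cited reference, and the proof there follows the Vietoris--Koumandos line rather than a convexity argument. One establishes positivity for the extremal coefficient sequence, splits $(0,\pi)$ into ranges depending on $n$ (typically $0<\theta\le\pi-\pi/n$ and $\pi-\pi/n<\theta<\pi$), uses sharp bounds on $\bigl|\sum_{k=m}^{n}e^{ik\theta}\bigr|$ together with induction on $n$, and uses $\lambda+\mu\ge 1$ to verify Vietoris-type coefficient inequalities (relating consecutive and paired coefficients), not to control a telescoped boundary term; the passage from the extremal sequence to general majorizing sequences is the single summation by parts recorded in Corollary~\ref{cor:new-alpha-beta-positive-sum}. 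Note also that $\lambda+\mu\ge 1$ is not a sharp threshold for positivity of such sums (Koumandos's extension of Vietoris handles power decay with exponent well below $1$), so the expectation that your boundary analysis would ``reveal why $\lambda+\mu\ge 1$ is the sharp threshold'' is itself a sign that the proposed mechanism is not the one driving the result.
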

Using summation by parts the following corollary of Theorem
\ref{thm:new-alpha-beta-positive-sum} can be obtained.
\begin{corollary}\label{cor:new-alpha-beta-positive-sum}
For $\alpha\geq0,\beta\geq0$ and $\lambda\geq0,\mu\geq0$ such that
$\lambda+\mu\geq1$.
If there exists a sequence $\{a_k\}$ of positive real numbers such that,
\begin{align*}
(k+1+\alpha)^{\lambda}(k+1+\beta)^{\mu}a_{k+1} \leq (k+\alpha)^{\lambda}(k+\beta)^{\mu}
a_k\leq \cdots \leq (2+\alpha)^{\lambda}(2+\beta)^{\mu}a_2 \leq a_1 \leq \frac{a_0}{2}
\end{align*}
Then for $n\in\mathbb{N}$, the following inequalities hold:
\begin{align*}
\frac{a_0}{2}+\sum_{k=1}^n a_k \cos{k\theta}>0 \quad and \quad \sum_{k=1}^n a_k \sin{k\theta}>0,
\quad \mbox{where $0<\theta<\pi$}.
\end{align*}
\end{corollary}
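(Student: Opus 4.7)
The plan is to deduce Corollary \ref{cor:new-alpha-beta-positive-sum} from Theorem \ref{thm:new-alpha-beta-positive-sum} via Abel summation, after a normalization that converts the chain of inequalities on $\{a_k\}$ into monotonicity of a single auxiliary sequence.

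First I would introduce the sequence $\{c_k\}_{k\ge 0}$ designed so that $a_k = c_k b_k$ in a useful sense, where $b_k$ is the sequence from Theorem \ref{thm:new-alpha-beta-positive-sum}. Concretely, set $c_0 := a_0/2$, $c_1 := a_1$, and $c_k := (k+\alpha)^{\lambda}(k+\beta)^{\mu}a_k$ for $k \ge 2$. With $b_0 = 2$, $b_1 = 1$, and $b_k = 1/((k+\alpha)^{\lambda}(k+\beta)^{\mu})$ for $k \ge 2$, one checks that $c_k b_k = a_k$ for $k \ge 1$ and $c_0 = a_0/2$. The given chain of hypotheses then says exactly that $c_0 \ge c_1 \ge c_2 \ge \cdots > 0$, which is the clean form needed for summation by parts.

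Next, I would denote by $T_n(\theta) := \tfrac{b_0}{2}+\sum_{k=1}^n b_k\cos k\theta$ and $U_n(\theta) := \sum_{k=1}^n b_k\sin k\theta$ the cosine and sine partial sums provided by Theorem \ref{thm:new-alpha-beta-positive-sum}, so that $T_n(\theta)>0$ and $U_n(\theta)>0$ for $\theta\in(0,\pi)$ and every $n\in\mathbb{N}$. Writing $b_k\cos k\theta = T_k - T_{k-1}$ (with $T_0 = b_0/2 = 1$) and performing Abel summation gives
\begin{align*}
\frac{a_0}{2}+\sum_{k=1}^n a_k\cos k\theta
&= c_0 + \sum_{k=1}^n c_k(T_k - T_{k-1})\\
&= (c_0 - c_1) + c_n T_n(\theta) + \sum_{k=1}^{n-1}(c_k - c_{k+1})\,T_k(\theta).
\end{align*}
Each term on the right is nonnegative by monotonicity of $\{c_k\}$, and $c_n T_n(\theta)>0$, so the cosine sum is strictly positive. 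The analogous computation with $b_k\sin k\theta = U_k - U_{k-1}$ (and $U_0=0$) yields
\begin{align*}
\sum_{k=1}^n a_k\sin k\theta = c_n U_n(\theta) + \sum_{k=1}^{n-1}(c_k - c_{k+1})\,U_k(\theta),
\end{align*}
which is strictly positive for the same reason.

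The only mildly delicate point is the normalization at the low indices, where $b_0=2$ and $b_1=1$ break the general formula $b_k = 1/((k+\alpha)^\lambda(k+\beta)^\mu)$; once this is absorbed into the definition of $c_0$ and $c_1$, the two hypotheses $a_1\le a_0/2$ and $(2+\alpha)^\lambda(2+\beta)^\mu a_2 \le a_1$ correspond to the monotonicity at the first two steps of $\{c_k\}$, and the rest is routine. I do not expect any genuine obstacle beyond getting this bookkeeping right.
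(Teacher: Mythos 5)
Your proposal is correct and is exactly the argument the paper has in mind: the paper derives the corollary from Theorem \ref{thm:new-alpha-beta-positive-sum} by summation by parts without writing out the details, and your normalization $c_0=a_0/2$, $c_1=a_1$, $c_k=(k+\alpha)^{\lambda}(k+\beta)^{\mu}a_k$ together with the Abel summation against the partial sums $T_k$, $U_k$ is the standard way to fill them in. The bookkeeping at $k=0,1$ (using $T_0=1$, $U_0=0$) is handled correctly, so nothing further is needed.
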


The main purpose of this note is to use Corollary \ref{cor:new-alpha-beta-positive-sum}
to find certain geometric properties of analytic functions,
in particular univalent functions.
Let $\mathcal{A}_0$ be the subclass of the class of analytic functions
$f\in\mathcal{A}$ with normalized condition $f(0)=0,f'(0)=1$
in the unit disc $\mathbb{D}=\{z\in\mathbb{D}, |z|<1\}$..
The subclasses of $\mathcal{A}_0$ consisting of univalent
function is denoted by $\mathcal{S}$. Several subclasses
of univalent functions play a prominent role in the theory
of univalent functions.
For $0\leq \gamma<1$, let $\mathcal{S}^{\ast}(\gamma)$ be
the family of functions $f$ starlike of order $\gamma$ i.e.
if $f\in\mathcal{A}_0$ satisfies the analytic characterization,
\begin{align*}
f\in\mathcal{S}^{\ast}(\gamma)\Longleftrightarrow {\rm{Re}}
\left(\frac{z f'(z)}{f(z)} \right) > \gamma,\hbox{\quad for $ z\in \mathbb{D}.$}
\end{align*}
For $0\leq \gamma<1$, let $C(\gamma)$ be the family of functions
$f$ convex of order $\gamma$ i.e.
if $f\in\mathcal{A}_0$ satisfies the analytic characterization,
\begin{align*}
f\in\mathcal{C}(\gamma)\Longleftrightarrow {\rm{Re}}
\left( 1+ \frac{z f''(z)}{f'(z)} \right) > \gamma,\hbox{\quad for $ z\in \mathbb{D}.$}
\end{align*}
These two classes are related by the Alexander transform,
$f\in\mathcal{C}(\gamma)
\Longleftrightarrow zf'\in\mathcal{S}^{\ast}(\gamma)$. The usual classes
of starlike functions (with respect to origin) and convex functions are denoted respectively by
$\mathcal{S}^{\ast}(0)\equiv\mathcal{S}^{\ast}$ and
$\mathcal{C}(0)\equiv \mathcal{C}$.
An analytic function $f$ is said to be close-to-convex of order $\gamma$,
$(0\leq\gamma<1)$ with respect to a fixed starlike
function $g$ if and only if, it satisfies the analytic characterization,
\begin{align*}
{\mathrm{Re}}\, \, e^{i\eta}\left( \frac{zf'(z)}{g(z)}-\mu\right)>0,\quad
\hbox{$z\in\mathbb{D}$, $\eta\in(-\pi/2,\pi,2)$, $g\in \mathcal{S}^{\ast}$}.
\end{align*}
The family of all close-to-convex
function of order $\mu$ with respect to $g\in\mathcal{S}^{\ast}$ is denoted by $\mathcal{K}_g(\mu)$.
Further, for $0\leq \mu<1$, for each $f\in\mathcal{K}_g(\mu)$ is also univalent in $\mathbb{D}$.
The proper inclusion between these classes is given by
\begin{align*}
\mathcal{C} \subsetneqq \mathcal{S}^{\ast} \subsetneqq \mathcal{K} \subsetneqq \mathcal{S}.
\end{align*}
Another important subclass is the class of typically real functions .
A function $f\in \mathcal{A}_0$ is typically real if $\mathrm {Im}(z)\mathrm{Im}(f(z))\geq 0$
where $z\in\mathbb{D}$.
Its class is denoted by $\mathcal{T}$.
For several interesting geometric properties of these classes, one can refer
to the standard monographs \cite{duren-1983-book,goodman-1983-book,pommerenke-1975-book}
on univalent functions.

\begin{remark}\label{remark:starlike-functions}
The functions
\begin{align*}
z, \quad \frac{z}{1\pm z}, \quad \frac{z}{1\pm z^2}, \quad \frac{z}{(1\pm z)^2}, \quad \frac{z}{(1\pm z+z^2)}
\end{align*}
are the only nine starlike univalent functions having integer coefficients in $\mathbb{D}$.
It will be interesting to find $f$ to be close-to-convex when the corresponding starlike
function $g$ takes one of the above form.
\end{remark}

If we take $\eta=0$ and $g(z)=\frac{z}{(1-z)^2}$ then $\mathrm{Re}((1-z)^2f'(z))>0$ which implies
$zf'(z)$ is typically real function. A function $f\in\mathcal{A}_0$ is said to typically real if
$\mathrm{Im}\,f(z)\mathrm{Im}(z)>0$ whenever $\mathrm{Im}(z)\neq0$, $z\in\mathbb{D}$.
The function $k_{\gamma}(z):=\frac{z}{(1-z)^{2-2\gamma}}$
is the extremal function for the class of
starlike function of order $\gamma$.
Note that $k_0(z)$ is the well-known Koebe function and the function $k_{1/2}(z)=z/(1-z)$
is the extremal function for the class $\mathcal{C}$. A function $f(z)$ is said to be pre-starlike of order
$\gamma$, $0\leq \gamma <1$, if $k_{\gamma}(z)\ast f(z)=\frac{z}{(1-z)^2}\ast f(z)\in\mathcal{S}^{\ast}(\gamma)$
where '$\ast$' is the convolution operator or Hadamard product.
This class was introduced by Ruscheweyh\cite{ruscheweyh-1977-prestarlike} . For more details of this class see
\cite{ruscheweyh-1982-book}.
Here the Hadamard product or convolution is defined as follows:
Let $f(z)=\displaystyle\sum_{k=0}^{\infty}a_kz^k$
and $g(z)=\displaystyle\sum_{k=0}^{\infty}b_kz^k$, $z\in\mathbb{D}$.
Then,
\begin{align*}
(f\ast g)(z)=\sum_{k=0}^{\infty}a_kb_kz^k, \quad z\in\mathbb{D}.
\end{align*}

Among all applications of positivity of trigonometric polynomials, the geometric properties
of the subclasses of analytic functions are considered in this note. In this direction, Ruscheweyh
\cite{ruscheweyh-1987-viet-cond-starlike-func} obtained some coefficient conditions for the
class of starlike functions using the classical result of Vietoris \cite{vietoris-1958}.
So it would be interesting to find the geometric properties of function $f(z)$ in which
Corollary \ref{cor:new-alpha-beta-positive-sum} plays a vital role.

\section{Geometric properties of an analytic function}
\label{sec:2-geom-prop}
In this section, we provide conditions on the Taylor coefficients of an analytic function
$f$ to guarantee the admissibility of $f$ in subclasses of $\mathcal{S}$, using Corollary
\ref{cor:new-alpha-beta-positive-sum}.
The next lemma which is the generalization of \cite [Lemma 2]{ruscheweyh-1987-viet-cond-starlike-func}
is the crucial ingredient in the proof of the following theorem.
\begin{lemma}\rm(\cite{saiful-swami-2011-CAMWA},Theorem 3.1)\label{lemma:saiful-2011-CAMWA-starlike}
Let $0\leq\gamma<1$ and $f\in\mathcal{A}$ be such that $f'(z)$ and $f'(z)-\gamma\frac{f(z)}{z}$ are typically
 real in $\mathbb{D}$. Further if Re$f'(z)>0$ and Re$(f'(z)-\gamma\frac{f(z)}{z})>0$, then $f\in\mathcal{S}^{\ast}(\gamma)$.
\end{lemma}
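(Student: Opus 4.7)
The plan is to recast the inequality $\operatorname{Re}(zf'(z)/f(z))>\gamma$ as a positivity statement about the ratio of two auxiliary functions, each of which inherits ``typically real and of positive real part'' from the hypotheses, and then to exploit a simple argument bound enjoyed by any such function.

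First, I would set $N(z):=f'(z)-\gamma f(z)/z$ and $D(z):=f(z)/z$. A one-line manipulation gives
\begin{align*}
\frac{zf'(z)}{f(z)}-\gamma \;=\; \frac{zf'(z)-\gamma f(z)}{f(z)} \;=\; \frac{N(z)}{D(z)},
\end{align*}
so the conclusion $f\in\mathcal{S}^{\ast}(\gamma)$ reduces to showing $\operatorname{Re}(N(z)/D(z))>0$ in $\mathbb{D}$. The hypotheses already supply that $N$ is typically real with positive real part, so the remaining work is (i) to deduce the same two properties for $D$, and (ii) to pass from these properties of $N$ and $D$ separately to positivity of the ratio.

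For step (i) I would use the radial integral representation
\begin{align*}
D(z) \;=\; \frac{1}{z}\int_{0}^{z}f'(t)\,dt \;=\; \int_{0}^{1}f'(sz)\,ds.
\end{align*}
Since $\operatorname{Re} f'>0$ by hypothesis, this immediately gives $\operatorname{Re} D(z)>0$. Since $\operatorname{Im}(sz)$ and $\operatorname{Im} z$ have the same sign for $s\in(0,1]$ and $f'$ is typically real, the same formula gives $\operatorname{Im} D(z)\cdot\operatorname{Im} z\ge 0$, so $D$ is typically real as well. Moreover $\operatorname{Re} f'>0$ combined with the Noshiro--Warschawski theorem makes $f$ univalent, so $D$ is zero-free and $N/D$ is holomorphic in $\mathbb{D}$.

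For step (ii), the key observation is that whenever $h$ is typically real with $\operatorname{Re} h>0$ on $\mathbb{D}$, the image of the open upper half-disk lies in the first quadrant $\{w:\arg w\in[0,\pi/2)\}$, the image of the open lower half-disk lies in the fourth quadrant $\{w:\arg w\in(-\pi/2,0]\}$, and $h$ is real and positive on the real diameter. Applying this to both $N$ and $D$, in each half-disk
\begin{align*}
\arg\frac{N(z)}{D(z)}\;=\;\arg N(z)-\arg D(z)\;\in\;\Bigl(-\tfrac{\pi}{2},\tfrac{\pi}{2}\Bigr),
\end{align*}
while on the real diameter the ratio is a quotient of two positive reals; either way $\operatorname{Re}(N/D)>0$, which together with step (i) yields $f\in\mathcal{S}^{\ast}(\gamma)$. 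The step I expect to require the most care is (i)---specifically, verifying that the radial average transmits the typically-real property from $f'$ to $D$; the real-part positivity is immediate, but the sign of the imaginary part needs the segment from $0$ to $z$ to remain in a single closed half-disk, which is true because each half-disk is star-shaped at the origin. With that in hand, step (ii) is a short geometric observation about arguments and the remaining assembly is routine.
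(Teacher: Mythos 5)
The paper does not prove this lemma---it is quoted verbatim from the cited source---so there is no in-paper argument to compare against; your proof is correct and is essentially the standard one (the decomposition $zf'/f-\gamma=N/D$ with $D(z)=f(z)/z=\int_0^1 f'(sz)\,ds$ inheriting positivity and typical realness from $f'$, followed by the quadrant/argument count for the quotient). The only cosmetic remark is that the appeal to Noshiro--Warschawski is superfluous: $\operatorname{Re}D>0$ already makes $D$ zero-free.
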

\begin{theorem}\label{thm:new-positivity-starlike}
Let $\alpha\geq0,\beta\geq0$, $\lambda\geq0,\mu\geq0$ such that $\lambda+\mu\geq1$, let $\{a_k\}_{k=1}^{\infty}$
be any sequence of positive real numbers such that $a_1=1$. Let $\{a_k\}$
satisfy the following conditions:
\begin{enumerate}
\item $(2-\gamma)a_2\leq (1-\gamma)a_1$,
\item $(3-\gamma)a_3 \leq \frac{1}{(2+\alpha)^{\lambda}(2+\beta)^{\mu}}(2-\gamma)a_2$,
\item $(k+2-\gamma)a_{k+2} \leq \left(1+\frac{1}{k+\alpha}\right)^{-\lambda}\left(1+\frac{1}{k+\beta}\right)^{-\mu}
(k+1-\gamma)a_{k+1}, \quad \forall k\geq2$.
\end{enumerate}
Then for $0\leq\gamma<1$, $f_n(z)=z+\displaystyle\sum_{k=1}^n a_kz^k$ and
$f(z)=\displaystyle\lim_{n\to \infty}f_n(z)=z+\displaystyle\sum_{k=2}^{\infty} a_k z^k$ are starlike of order $\gamma$.
\end{theorem}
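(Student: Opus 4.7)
My plan is to apply Lemma~\ref{lemma:saiful-2011-CAMWA-starlike}, which reduces the required starlikeness of order $\gamma$ to four positivity statements in $\mathbb{D}$: typical reality and positivity of the real part for both $f'(z)$ and $h(z):=f'(z)-\gamma f(z)/z$. Writing $z=re^{i\theta}$ with $0<r<1$ and $0<\theta<\pi$, and using that all Taylor coefficients are real, these four statements translate exactly into the positivity of
\begin{align*}
&1+\sum_{k=1}^{\infty}(k+1)a_{k+1}r^k\cos k\theta,\qquad \sum_{k=1}^{\infty}(k+1)a_{k+1}r^k\sin k\theta,\\
&(1-\gamma)+\sum_{k=1}^{\infty}(k+1-\gamma)a_{k+1}r^k\cos k\theta,\qquad \sum_{k=1}^{\infty}(k+1-\gamma)a_{k+1}r^k\sin k\theta.
\end{align*}

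The idea is to apply Corollary~\ref{cor:new-alpha-beta-positive-sum} twice. For the second pair of series I would set $b_0:=2(1-\gamma)$ and $b_k:=(k+1-\gamma)a_{k+1}$ for $k\geq 1$; a direct reading shows that hypotheses (1), (2), (3) of the theorem become \emph{exactly} the descending chain
\begin{align*}
(k+1+\alpha)^\lambda(k+1+\beta)^\mu b_{k+1}\leq (k+\alpha)^\lambda(k+\beta)^\mu b_k\leq\cdots\leq (2+\alpha)^\lambda(2+\beta)^\mu b_2\leq b_1\leq b_0/2
\end{align*}
required by the corollary. For the first pair I would set $c_0:=2$ and $c_k:=(k+1)a_{k+1}$ and verify, by elementary algebra, that the $\gamma$-weighted hypotheses imply the corresponding $\gamma=0$ inequalities for $\{c_k\}$; the model computation for (3) is $(k+2)(k+1-\gamma)\leq (k+1)(k+2-\gamma)$, which reduces to $\gamma\geq 0$, and similar one-line comparisons handle (1) and (2). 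Corollary~\ref{cor:new-alpha-beta-positive-sum} then yields positivity of the partial trigonometric sums at $r=1$ in both settings.

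To lift positivity of the partial sums at $r=1$ to positivity of the full series for every $0<r<1$, I would use the standard Abel summation identity
\begin{align*}
\sum_{k=0}^{N}r^k t_k = (1-r)\sum_{j=0}^{N-1}r^j S_j + r^N S_N,\qquad S_j:=\sum_{k=0}^{j}t_k,
\end{align*}
applied to $t_k=b_k\cos k\theta$ or $b_k\sin k\theta$ (and the $c_k$ analogues). Since each $S_j>0$ on $(0,\pi)$ and $0<r<1$, the right-hand side is positive for every $N$; letting $N\to\infty$ gives the positivity of the full series needed for Lemma~\ref{lemma:saiful-2011-CAMWA-starlike}. The polynomial case $f_n$ follows by the same Abel argument applied to finite sums.

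The main obstacle will be essentially bookkeeping: carefully translating the three hypotheses of the theorem into both the $\gamma$-weighted and the unweighted chains required by Corollary~\ref{cor:new-alpha-beta-positive-sum}, and cleanly running the Abel-summation step that converts the $r=1$ partial-sum positivity into positivity of each of the four series for all $0<r<1$. No deeper idea seems necessary beyond Lemma~\ref{lemma:saiful-2011-CAMWA-starlike}, Corollary~\ref{cor:new-alpha-beta-positive-sum}, and Abel summation.
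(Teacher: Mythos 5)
Your proposal is correct and follows essentially the same route as the paper: reduce to Lemma~\ref{lemma:saiful-2011-CAMWA-starlike}, set $b_0=2(1-\gamma)$, $b_k=(k+1-\gamma)a_{k+1}$ (resp.\ $c_0=2$, $c_k=(k+1)a_{k+1}$), check that the hypotheses give exactly the monotonicity chain of Corollary~\ref{cor:new-alpha-beta-positive-sum} (the paper uses the same one-line comparison $\tfrac{k+1-\gamma}{k+2-\gamma}<\tfrac{k+1}{k+2}$ for the unweighted chain), and conclude via typical reality. The only cosmetic differences are that you pass from positivity on $|z|=1$ to positivity in $\mathbb{D}$ by Abel summation where the paper invokes the minimum principle for harmonic functions, and you treat the infinite series directly where the paper handles the partial sums $f_n$ and then lets $n\to\infty$ using normality of the family of starlike functions; both variants are sound.
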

\begin{proof}
Let $f_n(z)=z+\displaystyle\sum_{k=2}^{n} a_k z^k$, $z\in\mathbb{D}$
be the partial sum of $f$. Then $f_n'(z)=1+\displaystyle\sum_{k=1}^{n-1} (k+1)a_{k+1} z^k$.\\
Define
\begin{align*}
g_n(z):=f_n'(z)-\gamma\frac{f_n(z)}{z}=\frac{b_0}{2}+\sum_{k=1}^{n-1}b_kz^k, \quad z\in\mathbb{D},
\end{align*}
where $b_0=2(1-\gamma)$ and $b_k=(k+1-\gamma)a_{k+1}$ , $\forall k\geq 1$.
Consider,
\begin{align*}
\frac{b_0}{2}- b_1 =(1-\gamma)a_1-(2-\gamma)a_2\geq 0,
\end{align*}
and
\begin{align*}
\frac{b_1}{(2+\alpha)^{\lambda}(2+\beta)^{\mu}}-b_2
=\frac{1}{(2+\alpha)^{\lambda}(2+\beta)^{\mu}}(2-\gamma)a_2-(3-\gamma)a_3\geq0.
\end{align*}
Now for $k\geq 2$,
\begin{align*}
&(k+\alpha)^{\lambda}(k+\beta)^{\mu}b_k-(k+1+\alpha)^{\lambda}(k+1+\beta)^{\mu}b_{k+1}
=(k+1+\alpha)^{\lambda}(k+1+\beta)^{\mu}\\
&\quad\quad\times\left[
\left(1+\frac{1}{k+\alpha}\right)^{-\lambda}\left(1+\frac{1}{k+\beta}\right)^{-\mu}
(k+1-\gamma)a_{k+1}-(k+2-\gamma)a_{k+2}\right]\geq0
\end{align*}
So by the given hypothesis, $\{b_k\}$ satisfy the conditions of Corollary \ref{cor:new-alpha-beta-positive-sum}
which implies $\mathrm{Re}\,g_n(z)>0$ and $\mathrm{Im} \,g_n(z)>0$ if $\mathrm{Im}(z)>0$.
By reflection principle $\mathrm{Im}\, g_n(z)<0$ if $\mathrm{Im} (z)<0$. So $g_n(z)$ is typically
real function. In order to prove the theorem it is remaining to show that $\mathrm{Re} f_n'(z)>0$ and $f_n'(z)$ is typically real.
 In this case $b_k=(k+1)a_{k+1}$ and $b_0=2$.  So such $b_k$ also satisfy given hypothesis because
 $\frac{k+1-\gamma}{k+2-\gamma}<\frac{k+1}{k+2}$ ,
 for all $k\geq0$. So $\mathrm{Re} f_n'(z)>0$ and again using reflection principle we get that
 $f_n'(z)$ is typically real in $\mathbb{D}$.

 Applying Lemma \ref{lemma:saiful-2011-CAMWA-starlike}, we get that $f_n(z)\in\mathcal{S}^{\ast}(\gamma)$.
 Since $\displaystyle\lim_{n\to\infty}f_n(z)=f(z)$ and
 the family of starlike functions is normal \cite[p.217]{nehari}, we get
 $f(z)=\displaystyle\lim_{n\rightarrow\infty}f_n(z)$ is also starlike of order $\gamma$.
\end{proof}
\begin{remark}
If $\gamma=0$ in Theorem \ref{thm:new-positivity-starlike}, then we get $\mathrm{Re}(f_n'(z))>0$
which implies $f_n(z)$ is close-to-convex with respect to $z$ and $f_n'(z)$ is typically real
also and with
\begin{align*}
\mathrm{Re}(1-z)f_n'(z)=\mathrm{Re} (1-z)\mathrm{Re} f_n'(z)+\mathrm{Im}(z)\mathrm{Im} f_n'(z)>0
\end{align*}
this yields $f_n(z)$ is close-to-convex with respect to starlike function $z/(1-z)$.
\end{remark}

\begin{example}\label{exp:starlike}
Consider the sequence $\{a_k\}$ as $a_1=1$, $a_2=\frac{1}{2}$ and $a_k=\frac{1}{k^2}$
for $k\geq3$, then by Theorem \ref{thm:new-positivity-starlike} the function
\begin{align*}
f(z)=z+\frac{z^2}{2}+\sum_{k=3}^{\infty}\frac{z^k}{k^2},\quad z\in\mathbb{D},
\end{align*}
is starlike univalent. But \cite[Theorem 2.1]{sangal-swaminathan-starlike-HGF} fails to include this function.
Hence Theorem \ref{thm:new-positivity-starlike} is better than \cite[Theorem 2.1]{sangal-swaminathan-starlike-HGF}
in the sense that it is likely to include more cases.
\end{example}

By proving that $zf_n'(z)$ is typically real function in the similar fashion,
we obtain the next result.

\begin{theorem}\label{thm:new-positivity-close-to-convex}
Let $\alpha\geq0,\beta\geq0$ and $\lambda\geq0,\mu\geq0$ such that $\lambda+\mu\geq1$,
let $\{a_k\}_{k=1}^{\infty}$
be any sequence of positive real numbers such that $a_1=1$. If $\{a_k\}$
satisfy the following conditions:
\begin{align*}
(k+1+\alpha)^{\lambda}(k+1+\beta)^{\mu}(k+1)a_{k+1} &\leq (k+\alpha)^{\lambda}(k+\beta)^{\mu}
ka_k\\
&\leq \cdots \leq (2+\alpha)^{\lambda}(2+\beta)^{\mu}2a_2 \leq 1 ,\quad \hbox{for $k\geq 2$}.
\end{align*}
Then  $f_n(z)=z+\displaystyle\sum_{k=2}^n a_kz^k$ and $f(z)=z+\displaystyle\sum_{k=2}^{\infty}a_kz^k$
 are close-to-convex with respect to starlike function $z/(1-z^2)$.
\end{theorem}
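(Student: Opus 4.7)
The plan is to follow the pattern of Theorem \ref{thm:new-positivity-starlike}, but now to show directly that $h_n(z):=zf_n'(z)$ is typically real on $\mathbb{D}$. The bridge from typical realness to close-to-convexity with respect to $z/(1-z^2)$ is the classical equivalence: a normalized analytic function $h(z)=z+c_2z^2+\cdots$ with real coefficients is typically real in $\mathbb{D}$ if and only if $\mathrm{Re}\,((1-z^2)h(z)/z)>0$ there (this is Robertson's representation via the kernel $z/(1-2z\cos\phi+z^2)$). Specializing to $h=zf_n'$, the positive real part becomes $\mathrm{Re}\,((1-z^2)f_n'(z))>0$, which, since $zf_n'(z)/g(z)=(1-z^2)f_n'(z)$ for $g(z)=z/(1-z^2)$, is precisely close-to-convexity of $f_n$ with $\eta=0$.

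To verify typical realness, write
\[
h_n(z)=zf_n'(z)=\sum_{k=1}^{n}ka_kz^k,\qquad a_1=1,
\]
fix $z=re^{i\theta}$ with $0<r<1$, $0<\theta<\pi$, and set $d_k:=ka_kr^k$ for $k\geq 1$, $d_0:=2r$. Then $\mathrm{Im}\,h_n(z)=\sum_{k=1}^{n}d_k\sin k\theta$, and multiplying the theorem's hypothesis by $r^{k+1}$ while using $r<1$ yields
\[
(k+1+\alpha)^{\lambda}(k+1+\beta)^{\mu}d_{k+1}\leq(k+\alpha)^{\lambda}(k+\beta)^{\mu}d_k\qquad(k\geq 2),
\]
together with $(2+\alpha)^{\lambda}(2+\beta)^{\mu}d_2\leq r^2\leq r=d_1=d_0/2$. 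Thus $\{d_k\}$ satisfies the hypothesis of Corollary \ref{cor:new-alpha-beta-positive-sum}, which gives $\mathrm{Im}\,h_n(z)>0$ on the upper half-disc. Since $h_n$ has real Taylor coefficients, the reflection principle provides the opposite sign on the lower half-disc, and therefore $zf_n'$ is typically real in $\mathbb{D}$.

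By the equivalence recalled above, $\mathrm{Re}\,((1-z^2)f_n'(z))>0$, so each $f_n\in\mathcal{K}_g(0)$ with $g(z)=z/(1-z^2)$. The family of close-to-convex functions with a fixed starlike comparison is normal, and the locally uniform convergence $f_n\to f$ in $\mathbb{D}$ transfers this property to $f$.

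The only step that is not a direct transcription of the proof of Theorem \ref{thm:new-positivity-starlike} is the invocation of the Robertson-type equivalence between typical realness of $zf'$ and close-to-convexity with respect to $z/(1-z^2)$; after that, the positivity of the relevant sine sum is delivered by Corollary \ref{cor:new-alpha-beta-positive-sum} applied to $\{ka_kr^k\}$ in exactly the same pattern as before, with the sequence $\{ka_k\}$ playing the role previously played by $\{(k+1-\gamma)a_{k+1}\}$.
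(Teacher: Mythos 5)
Your proposal is correct and follows exactly the route the paper intends: the paper gives no details for this theorem beyond the remark that one proves $zf_n'(z)$ is typically real ``in the similar fashion,'' and your argument does precisely that, applying Corollary \ref{cor:new-alpha-beta-positive-sum} to the coefficients $ka_k$ (damped by $r^k$, which preserves the monotonicity chain) and then passing to $\mathrm{Re}\,[(1-z^2)f_n'(z)]>0$ via Robertson's characterization of typically real functions. The only difference is that you make explicit the bridge from typical realness of $zf_n'$ to close-to-convexity with respect to $z/(1-z^2)$, which the paper leaves implicit.
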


Note that Theorem \ref{thm:new-positivity-close-to-convex}
provides close-to-convexity of $f$ with respect to the
function $z/(1-z^2)$. Results for the close-to-convexity of $f$ with respect to other
four starlike functions given in Remark \ref{remark:starlike-functions}
are of considerable interest, and the authors have considered some of these cases separately elsewhere.
The next result provides the coefficient conditions for $f$ to be in the class of prestarlike
functions of order $\gamma$, $0\leq \gamma<1$.

\begin{theorem}\label{thm:prestarlike}
Let $\alpha\geq0,\beta\geq0$, $\lambda\geq0,\mu\geq0$ such that
$\lambda+\mu\geq1$, let $\{a_k\}_{k=1}^{\infty}$
be any sequence of positive real numbers such that $a_1=1$. Let $\{a_k\}$ satisfy
the following conditions:
\begin{enumerate}
\item $(2+\alpha)^{\lambda}(2+\beta)^{\mu}(3-\gamma)(3-2\gamma)a_3 \leq 2(2-\gamma)a_2\leq a_1$,
\item $(k+1+\alpha)^{\lambda}(k+1+\beta)^{\mu}(k+2-\gamma)(k+2-2\gamma)a_{k+2}
\leq (k+\alpha)^{\lambda}(k+\beta)^{\mu} (k+1-\gamma)(k+1)a_{k+1}, \quad \forall k\geq2$.
\end{enumerate}
Then for $0\leq\gamma<1$, $f_n(z)=z+\displaystyle\sum_{k=2}^n a_kz^k$ is prestarlike of order $\gamma$.
Moreover,
$f(z)=z+\displaystyle\sum_{k=2}^{\infty} a_k z^k$ is prestarlike of order $\gamma$.
\end{theorem}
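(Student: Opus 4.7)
The plan is to reduce the prestarlikeness claim to Theorem \ref{thm:new-positivity-starlike} applied to the Hadamard product $k_\gamma \ast f$. By definition $f$ is prestarlike of order $\gamma$ precisely when $k_\gamma \ast f \in \mathcal{S}^{\ast}(\gamma)$, where $k_\gamma(z) = z/(1-z)^{2-2\gamma}$. Expanding,
\begin{equation*}
k_\gamma(z) = z + \sum_{k=2}^{\infty} B_k z^k,
\qquad B_k = \binom{k-2\gamma}{k-1} = \frac{(2-2\gamma)(3-2\gamma)\cdots(k-2\gamma)}{(k-1)!},
\end{equation*}
so that $(k_\gamma \ast f)(z) = z + \sum_{k\ge 2} c_k z^k$ with $c_k := B_k a_k$; in particular $c_1 = 1$.

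The strategy is to show that the coefficient sequence $\{c_k\}$ satisfies conditions (1)--(3) of Theorem \ref{thm:new-positivity-starlike}. The key identity is the simple ratio
\begin{equation*}
\frac{B_{k+1}}{B_k} = \frac{k+1-2\gamma}{k},
\end{equation*}
which I would verify first, since every subsequent inequality reduces to an elementary manipulation using it. With this in hand, condition (1) of Theorem \ref{thm:new-positivity-starlike}, namely $(2-\gamma)c_2 \le (1-\gamma)c_1$, becomes $2(2-\gamma)(1-\gamma)a_2 \le 1-\gamma$, i.e.\ $2(2-\gamma)a_2 \le a_1$, which is the rightmost inequality of hypothesis (1). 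Next, condition (2) of Theorem \ref{thm:new-positivity-starlike}, once multiplied through by $(2-2\gamma)$ and rearranged via $B_3/B_2 = (3-2\gamma)/2$, becomes exactly the leftmost inequality of hypothesis (1) of the present theorem. Finally, for $k\ge 2$, substituting $c_{k+2}=B_{k+2}a_{k+2}$ and $c_{k+1}=B_{k+1}a_{k+1}$ into condition (3) of Theorem \ref{thm:new-positivity-starlike} and cancelling the common factor $B_{k+1}$ via the ratio identity yields
\begin{equation*}
(k+1+\alpha)^{\lambda}(k+1+\beta)^{\mu}(k+2-\gamma)(k+2-2\gamma)a_{k+2} \le (k+\alpha)^{\lambda}(k+\beta)^{\mu}(k+1)(k+1-\gamma)a_{k+1},
\end{equation*}
which is exactly hypothesis (2) of the theorem. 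Hence all hypotheses of Theorem \ref{thm:new-positivity-starlike} are satisfied by $\{c_k\}$.

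Applying Theorem \ref{thm:new-positivity-starlike} to $k_\gamma \ast f_n$ shows that each partial sum $k_\gamma \ast f_n$ is starlike of order $\gamma$, so every $f_n$ is prestarlike of order $\gamma$. To pass to the limit, I would invoke the fact that $k_\gamma \ast f_n \to k_\gamma \ast f$ locally uniformly in $\mathbb{D}$ (the convolution is continuous in this topology because $k_\gamma$ is analytic on $\overline{\mathbb{D}_r}$ for each $r<1$) together with the normality of $\mathcal{S}^{\ast}(\gamma)$; this gives $k_\gamma \ast f \in \mathcal{S}^{\ast}(\gamma)$ and hence $f$ is prestarlike of order $\gamma$. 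The main (very mild) obstacle is purely bookkeeping: correctly tracking the ratios $B_{k+1}/B_k$ so that conditions (1) and (2) of the hypothesis line up with conditions (1)--(3) of Theorem \ref{thm:new-positivity-starlike}; no new analytic ingredient beyond that theorem and the definition of prestarlikeness is needed.
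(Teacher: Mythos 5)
Your proposal is correct and follows essentially the same route as the paper: both reduce prestarlikeness to showing $k_\gamma\ast f_n\in\mathcal{S}^{\ast}(\gamma)$, and both verify that the convolved coefficients $\frac{(2-2\gamma)_{k-1}}{(k-1)!}a_k$ satisfy the monotonicity hypotheses feeding into Corollary \ref{cor:new-alpha-beta-positive-sum}. Your version is slightly tidier in that it cites Theorem \ref{thm:new-positivity-starlike} as a black box (with the ratio $B_{k+1}/B_k=(k+1-2\gamma)/k$ correctly tracked), whereas the paper redoes the typically-real/positive-real-part argument for $g_n=f_n\ast k_\gamma$ directly; the content is the same.
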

\begin{proof}
Let $g_n(z):=f_n(z)\ast\frac{z}{(1-z)^{2-2\gamma}}$, $z\in\mathbb{D}$, $0\leq \gamma<1$.
To prove required theorem it is sufficient
to prove that $g_n(z)\in\mathcal{S}^{\ast}(\gamma)$.
\begin{align*}
g_n(z)=f_n(z) \ast \frac{z}{(1-z)^{2-2\gamma}} = z+\sum_{k=2}^n
 \frac{(2-2\gamma)_{k-1}}{(k-1)!}a_kz^k, \quad z\in\mathbb{D}.
\end{align*}
We prove that $g_n(z)$ satisfy the conditions of
Lemma \ref{lemma:saiful-2011-CAMWA-starlike}. For this, define
\begin{align*}
 h_n(z):=g_n'(z)-\gamma\frac{g_n(z)}{z}=\frac{b_0}{2}+\sum_{k=1}^{n-1}b_kz^k, \quad z\in\mathbb{D},
\end{align*}
where $b_0=2(1-\gamma)$ and $b_k=(k+1-\gamma)\frac{(2-2\gamma)_k}{k!}a_{k+1}$ for $k\geq 1$.
Using simple calculations, along with the hypothesis, $\{b_k\}$ satisfy the conditions of
Corollary \ref{cor:new-alpha-beta-positive-sum}.
Continuing the same argument as earlier, we get the desired result.
\end{proof}

\begin{remark}
Note that $\mathcal{R}(1/2)=\mathcal{S}^{\ast}(1/2)$. It can be easily verified that all the conditions of
Theorem \ref{thm:prestarlike} for $\mathcal{R}(1/2)$ coincide with the conditions of Theorem \ref{thm:new-positivity-starlike}
for $\mathcal{S}^{\ast}(1/2)$.
\end{remark}

For $\gamma=0$, $\mathcal{R}^{\ast}(0)\equiv \mathcal{C}$ and the following result is immediate.
\begin{corollary}
For $\alpha\geq0,\beta\geq0$, $\lambda\geq0,\mu\geq0$ such that
$\lambda+\mu\geq1$, let $\{a_k\}_{k=1}^{\infty}$
be any sequence of positive real numbers such that $a_1=1$. Let $\{a_k\}$
satisfy the following condition
\begin{align*}
(k+1+\alpha)^{\lambda}(k+1+\beta)^{\mu}(k+2)^2a_{k+2}&\leq (k+\alpha)^{\lambda}(k+\beta)^{\mu}
(k+1)^2a_{k+1}\\
&\leq\cdots\leq(2+\alpha)^{\lambda}(2+\beta)^{\mu}9 a_3 \leq 4 a_2\leq a_1.
\end{align*}
Then $f_n(z)=z+\displaystyle\sum_{k=2}^n a_kz^k$ is convex function. In particular
$f(z)=z+\displaystyle\sum_{k=2}^{\infty} a_k z^k$ is convex univalent.
\end{corollary}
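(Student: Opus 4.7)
The plan is to derive this corollary directly from Theorem~\ref{thm:prestarlike} by specializing to $\gamma = 0$. First, I would record the identification $\mathcal{R}(0) = \mathcal{C}$ mentioned right before the corollary statement: since $k_0(z) = z/(1-z)^2 = \sum_{k \ge 1} k z^k$, the Hadamard product gives $k_0 \ast f(z) = zf'(z)$ for any $f(z) = z + \sum_{k\ge 2} a_k z^k$. Thus $f$ is prestarlike of order $0$ if and only if $zf'(z) \in \mathcal{S}^{\ast}$, which by the Alexander transform is exactly $f \in \mathcal{C}$.

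Next I would substitute $\gamma = 0$ into the two hypotheses of Theorem~\ref{thm:prestarlike}. Condition (1) there reads $(2+\alpha)^{\lambda}(2+\beta)^{\mu}(3-\gamma)(3-2\gamma)a_3 \le 2(2-\gamma)a_2 \le a_1$, which at $\gamma = 0$ becomes $(2+\alpha)^{\lambda}(2+\beta)^{\mu}\cdot 9\, a_3 \le 4a_2 \le a_1$. Condition (2) of that theorem reads $(k+1+\alpha)^{\lambda}(k+1+\beta)^{\mu}(k+2-\gamma)(k+2-2\gamma)a_{k+2} \le (k+\alpha)^{\lambda}(k+\beta)^{\mu}(k+1-\gamma)(k+1)a_{k+1}$ for $k \ge 2$, which at $\gamma = 0$ becomes $(k+1+\alpha)^{\lambda}(k+1+\beta)^{\mu}(k+2)^2 a_{k+2} \le (k+\alpha)^{\lambda}(k+\beta)^{\mu}(k+1)^2 a_{k+1}$. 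Concatenating these two matches the full monotone chain hypothesized in the corollary.

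With the conditions verified, Theorem~\ref{thm:prestarlike} yields that $f_n$ is prestarlike of order $0$, hence convex, and that the limit function $f$ is likewise prestarlike of order $0$, and therefore convex univalent in $\mathbb{D}$.

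There is essentially no obstacle here beyond bookkeeping: the corollary is the endpoint $\gamma = 0$ of Theorem~\ref{thm:prestarlike}, and the only point to be cautious about is confirming that the factors $(k+1-\gamma)(k+1)$ and $(k+2-\gamma)(k+2-2\gamma)$ collapse at $\gamma = 0$ to $(k+1)^2$ and $(k+2)^2$ respectively, so the chain written in the corollary statement is literally what Theorem~\ref{thm:prestarlike} requires.
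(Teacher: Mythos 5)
Your proposal is correct and is exactly the paper's route: the authors state the corollary as an immediate specialization of Theorem~\ref{thm:prestarlike} at $\gamma=0$, using the identification $\mathcal{R}^{\ast}(0)\equiv\mathcal{C}$, and your verification that conditions (1) and (2) collapse to the chain $(2+\alpha)^{\lambda}(2+\beta)^{\mu}9a_3\leq 4a_2\leq a_1$ and $(k+1+\alpha)^{\lambda}(k+1+\beta)^{\mu}(k+2)^2a_{k+2}\leq(k+\alpha)^{\lambda}(k+\beta)^{\mu}(k+1)^2a_{k+1}$ is the only bookkeeping needed. No gaps.
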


\begin{example}
Let $f(z)=z+\frac{z^2}{4}+\displaystyle\sum_{k=3}^{\infty}\frac{z^k}{(k-1+\alpha)^{\lambda}(k-1+\beta)^{\mu}k^2}$
is convex univalent.\\
In particular if $\alpha=\beta=1$ and $\lambda=\mu=1/2$, we get that
$z+\frac{z^2}{4}+\displaystyle\sum_{k=3}^{\infty}\frac{z^k}{k^3}$
is convex.
\end{example}

\section{Application to Ces\`aro mean of type $(b-1,c)$}

The nth Ces\`aro mean of type $(b-1,c)$ of $f(z)\in\mathcal{A}_0$ is given by,
\begin{align}\label{eqn:def-sigma-bc}
s_n^{(b-1,c)}(z,f):=z+\sum_{k=2}^n \frac{B_{n-k}}{B_{n-1}} a_k z^k=s_n^{(b-1,c)}(z)\ast f(z) , \quad \hbox{$n\in\mathbb{N}$},
\end{align}
where $b$ and $c$ are real numbers such that $b+1>c>0$ and $B_0=1$
and $B_k=\frac{(1+b-c)}{b}\frac{(b)_k}{(c)_k}$ for $k\geq 1$.
Here by $(\alpha)_k$, $k\geq \mathbb{N}$, which is the well-known Pochhammer symbol, we mean the following:
\begin{align*}
(\alpha)_k=\alpha(\alpha+1)_{k-1}\quad  \hbox{with} \quad  (\alpha)_0=1.
\end{align*}
For $b=1+\delta$ and $c=1$, it follows that,
\begin{align*}
s_n^{(\delta,1)}(f,z)=s_n^{\delta}(f,z)=z+\sum_{k=2}^{n}\frac{(1+\delta)_{n-k}}{(n-k)!}\frac{(n-1)!}{(1+\delta)_{n-1}}a_kz^k,
\end{align*}
which is the Ces\`{a}ro mean of order $\delta$ for $\delta>-1$.
Since \eqref{eqn:def-sigma-bc} is one type of generalization of the well-known Ces\`aro mean
\cite{ruscheweyh-1992-geom-cesaro-result-in-math}
we call these Ces\`aro mean of
type $(b-1;c)$ as  generalized Ces\`aro operators.
The coefficients given in \eqref{eqn:def-sigma-bc} were considered in
\cite{sangal-swaminathan-positivity-alpha-beta} while finding positivity of
trigonometric polynomials. Using \eqref{eqn:def-sigma-bc} generalized Ces\`aro
averaging operators were studied in \cite{ponnusamy-naik-gen-means-2004}
which are generalization of the Ces\`aro operator given by Stempak \cite{stempak-cesaro-1994-Edinburg}.
The geometric properties of $s_n^{\delta}(z)$ are well-known. For details, see
\cite{ali-saiful-zero-free-2013,bustoz-1975-AMS,ruscheweyh-1992-geom-cesaro-result-in-math}.
Lewis \cite{lewis-1979-convolution-jacobi-SIAM}
proved that $s_n^{\delta}(z)$ is close-to-convex and hence univalent for
$\delta\geq1$. Ruscheweyh \cite{ruscheweyh-1992-geom-cesaro-result-in-math}
proved that it is prestarlike of order $(3-\delta)/2$.
Hence it would be interesting to see if the geometric properties of $s_n^{\delta}(z)$
can be extended to $s_n^{(b-1,c)}(f,z)$.
Such investigations are possible by various known methods in Geometric
function theory. In particular, the positivity techniques used in
Koumandos \cite{koumandos-2007-ext-viet-ramanujan} or Saiful and Swaminathan \cite{saiful-swami-2011-CAMWA}
can be applied to $s_n^{(b-1,c)}(z)$ as well. However, in view of Example \ref{exp:starlike},
we are interested in using the results available in Section \ref{sec:2-geom-prop}
to obtain the geometric properties of $s_n^{(b-1,c)}(z)$.

\begin{theorem}\label{thm:close-to-convex-generalized-mean}
Let $\{a_k\}$ be any sequence of positive real numbers such that $a_1=1$ and
$(b+n-2)a_1\geq2(c+n-2)a_2$. Let $b\geq c>0$, $0\leq \alpha\leq
\frac{6}{\lambda+4},0\leq \beta\leq \frac{6}{\mu+4}$ and $\lambda,\mu\geq0$ such that
$\lambda+\mu\geq1$ and $1\leq\lambda+\mu<2 $ and satisfies the following conditions:
\begin{itemize}
\item[(i)] $(2-\alpha\lambda)(2-\beta\mu)(b+n-3)a_2 \geq 2^{\lambda+\mu+1}(c+n-3)3a_3$
\item[(ii)] $(k-1+\alpha-\lambda)(k-1+\beta-\mu)(b+n-k-1)ka_k \geq (k-1+\alpha)
            (k-1+\beta)(c+n-k-1)(k+1)a_{k+1}$ for $3\leq k\leq n-3$
\item[(iii)] $(n-2+\alpha-\lambda)(n-2+\beta-\mu)(1+b-c)(n-1)a_{n-1}\geq
            (n-2+\alpha)(n-2+\beta)c n a_n$
\end{itemize}
Then $s_n^{(b-1,c)}(f,z)$ is close-to-convex with respect to $z$ and $\frac{z}{1-z}$
where $f(z)=z+\displaystyle\sum_{k=2}^{\infty}a_kz^k$.
Further for the same condition $s_n^{(b-1,c)}(f,z)$ is starlike univalent.
\begin{proof}
Let $s_n^{(b-1,c)}(f,z)=z+\displaystyle\sum_{k=2}^n \frac{B_{n-k}}{B_{n-1}}a_kz^k$. Then,
\begin{align*}
s_n^{(b-1,c)}(f,z)'= 1+\sum_{k=1}^{n-1} \frac{B_{n-k-1}}{B_{n-1}}(k+1)a_{k+1}z^k.
\end{align*}
For $0\leq r<1$ and $0\leq \theta\leq 2\pi$,
\begin{align*}
\mathrm {Re}\, s_n^{(b-1,c)}(f,z)' &=\frac{b_0}{2}+\sum_{k=1}^{n-1} b_kr^k\cos{k\theta},\\
\mathrm {Im}\, s_n^{(b-1,c)}(f,z)' &=\sum_{k=1}^{n-1}b_k r^k\sin{k\theta},
\end{align*}
where $b_0=2$ and $b_k=\frac{B_{n-k-1}}{B_{n-1}}(k+1)a_{k+1}$ for $k\geq1 $.
Hence $b_k$ and $b_{k+1}$ can be related as:
\begin{align*}
b_{k+1}=\frac{(c+n-k-2)(k+2)a_{k+2}}{(b+n-k-2)(k+1)a_{k+1}}b_k ,\hbox{\quad for $1\leq k\leq n-3$},
\end{align*}
and
\begin{align*}
b_{n-1}=\frac{c}{(1+b-c)}\frac{na_n}{(n-1)a_{n-1}}b_{n-2}.
\end{align*}
For the sequence $\{b_k\}$, our  aim is to prove that,
$\frac{b_0}{2}+\displaystyle\sum_{k=1}^{n-1} b_k r^k \cos{k\theta}>0$ and
$\displaystyle\sum_{k=1}^{n-1}b_k r^k\sin{k\theta}>0$.
Note that,
\begin{align*}
\frac{b_0}{2}-b_1=\frac{1}{(b+n-2)}[(b+n-2)a_1-2(c+n-2)a_2]\geq0.
\end{align*}
For a given $\alpha$ and $\beta$, we can easily get,
\begin{align*}
&\frac{1}{(2+\alpha)^{\lambda}(2+\beta)^{\mu}}= \frac{1}{2^{\lambda+\mu}}\left(1
+\frac{\alpha}{2}\right)^{-\lambda}\left(1+\frac{\beta}{2}\right)^{-\mu}\\
&=\frac{1}{2^{\lambda+\mu}}\left(1-\frac{\alpha\lambda}{2}+\frac{\lambda(\lambda+1)}{2!}\frac{\alpha^2}{2^2}-
\frac{\lambda(\lambda+1)(\lambda+2)}{3!}\frac{\alpha^3}{2^3}+\frac{\lambda(\lambda+1)(\lambda+2)(\lambda+3)}{4!}
\frac{\alpha^4}{2^4}-\cdots \right)\\
& \times \left( 1-\frac{\beta\mu}{2}+\frac{\mu(\mu+1)}{2!}\frac{\beta^2}{2^2}-
\frac{\mu(\mu+1)(\mu+2)}{3!}\frac{\beta^3}{2^3}+\frac{\mu(\mu+1)(\mu+2)(\mu+3)}{4!}
\frac{\beta^4}{2^4}-\cdots\right)\\
&=\frac{1}{2^{\lambda+\mu+2}}\bigg[(2-\alpha\lambda)+\lambda(\lambda+1)\frac{\alpha^2}{2^2}\left(1-
\frac{(\lambda+2)}{6}\alpha\right)\\
&\quad +\frac{\lambda(\lambda+1)(\lambda+2)(\lambda+3)}{3\cdot2^6}
\alpha^4\left(1-\frac{(\lambda+4)}{10}\alpha\right)+\cdots \bigg]\times \bigg[(2-\beta\mu)\\
&+\mu(\mu+1)\frac{\beta^2}{2^2}\left(1-\frac{(\mu+2)}{6}\beta\right)+\frac{\mu(\mu+1)(\mu+2)(\mu+3)}{3\cdot2^6}
\beta^4\left(1-\frac{\mu+4}{10}\beta\right)+\cdots\bigg]\\
&\geq \frac{(2-\alpha\lambda)(2-\beta\mu)}{2^{\lambda+\mu+2}} \quad
\hbox{if $0\leq \alpha \leq \frac{6}{\lambda+2}$ and $0\leq \beta\leq \frac{6}{\mu+2}$}.
\end{align*}
Hence we see that,
\begin{align*}
&\frac{b_1}{(2+\alpha)^{\lambda}(2+\beta)^{\mu}}-b_2
\geq \frac{(2-\alpha\lambda)(2-\beta\mu)b_1}{2^{\lambda+\mu+2}}-\frac{(c+n-3)3a_3}{(b+n-3)2a_2}b_1\geq0.
\end{align*}
For the other condition
$(k+\alpha)^{\lambda}(k+\beta)^{\mu}b_k \geq (k+1+\alpha)^{\lambda}(k+1+\beta)^{\mu}b_{k+1}$
to be satisfied, first we find,
\begin{align*}
&\left[ 1+\frac{1}{k+\alpha}\right]^{-\lambda}\left[ 1+\frac{1}{k+\beta}\right]^{-\mu}\\
= &\bigg[ 1-\frac{\lambda}{k+\alpha}+\frac{\lambda(\lambda+1)}{2!(k+\alpha)^2}\left(1-\frac{(2+\lambda)}{3.(k+\alpha)}\right)+
   \frac{\lambda(\lambda+1)(\lambda+2)(\lambda+3)}{4!(k+\alpha)^4}\left(1-\frac{\lambda+4}{5(k+\alpha)} \right)\\
   &+\cdots\bigg]
\times  \bigg[1-\frac{\mu}{k+\beta}+\frac{\mu(\mu+1)}{2!(k+\beta)^2}\left(1-\frac{(2+\mu)}{3.(k+\beta)}\right)+\\
&\quad \quad    \frac{\mu(\mu+1)(\mu+2)(\mu+3)}{4!(k+\beta)^4}\left(1-\frac{\mu+4}{5(k+\beta)} \right)+\cdots \bigg]
\end{align*}
\begin{align*}
\geq & \left(1-\frac{\lambda}{k+\alpha}\right)\cdot \left(1-\frac{\mu}{k+\beta}\right),\quad
\hbox{if $\frac{2+\lambda}{3(k+\alpha)}\leq1$ and $\frac{2+\mu}{3(k+\beta)}\leq1$ for $k\geq2$}.
\end{align*}
Clearly,
\begin{align*}
&\left(1+\dfrac{1}{k+\alpha}\right)^{-\lambda} \left(1+\dfrac{1}{k+\beta}\right)^{-\mu} b_k - b_{k+1}\\
&\geq \left(1-\dfrac{\lambda}{k+\alpha}\right) \left(1-\dfrac{\mu}{k+\beta}\right) b_k
- \frac{(c+n-k-2)(k+2)a_{k+2}}{(b+n-k-2)(k+1)a_{k+1}}b_k ,\hbox{\quad for $2 \leq k\leq n-3$}\\
&=b_{k-1}\left[\left(1-\dfrac{\lambda}{k-1+\alpha}\right) \left(1-\dfrac{\mu}{k-1+\beta}\right)
-\frac{(c+n-k-1)(k+1)a_{k+1}}{(b+n-k-1)k a_k}\right] \\
&\geq0,\hbox{ for $3 \leq k\leq n-2$}.
\end{align*}
For $k=n-2$, consider
\begin{align*}
&\dfrac{1}{\left(1+\dfrac{1}{n-2+\alpha}\right)^{\lambda}\left(1+\dfrac{1}{n-2+\beta}\right)^{\mu}}b_{n-2}-b_{n-1}\\
&\geq \left(1-\dfrac{\lambda}{n-2+\alpha}\right)\left(1-\dfrac{\mu}{n-2+\beta}\right)\frac{(1+b-c)}{c}(n-1)a_{n-1}-na_n
\geq0.
\end{align*}
We proved that $\frac{b_0}{2}+\displaystyle\sum_{k=1}^{n-1} b_k\cos{k\theta}>0$
and $\displaystyle\sum_{k=1}^{n-1} b_k \sin{k\theta}>0$ for $0<\theta<\pi$.
 By the minimum principle for harmonic functions,
 $\frac{b_0}{2}+\displaystyle\sum_{k=1}^{n-1} b_kr^k\cos{k\theta}>0$,
 $0\leq r<1$ and $0<\theta<\pi$ and $\displaystyle\sum_{k=1}^{n-1} b_k r^k \sin{k\theta}>0$
 for $0<\theta<\pi$ and $0\leq r<1$. Using reflection principle,
 $\displaystyle\sum_{k=1}^{n-1} b_k r^k \sin{k\theta}<0$ for $\pi<\theta<2\pi$  and $0\leq r<1$.
 Note that $s_n^{(b-1,c)}(f,z)$ is close to convex with respect to
$z$ if $\mathrm{Re}\,s_n^{(b-1,c)}(f,z)>0$
and $s_n^{(b-1,c)}(f,z)$ is close to convex with respect to
$\frac{z}{1-z}$ if $\mathrm{Re}[(1-z)s_n^{(b-1,c)}(f,z)']>0$. Now
 \begin{align*}
\mathrm{Re} [(1-z)s_n^{(b-1,c)}(f,z)'] &= \mathrm{Re}(1-z)\mathrm{Re}(s_n^{(b-1,c)}(f,z)')- \mathrm{Im}(1-z)\mathrm{Im}(s_n^{(b-1,c)}(f,z)')\\
 &=\mathrm{Re}(1-z)\mathrm{Re}(s_n^{(b-1,c)}(f,z)')+\mathrm{Im}(z)\mathrm{Im}(s_n^{(b-1,c)}(f,z)')>0.\qedhere
\end{align*}
\end{proof}
\end{theorem}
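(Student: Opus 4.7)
The plan is to apply Corollary \ref{cor:new-alpha-beta-positive-sum} to the coefficients of the real and imaginary parts of $s_n^{(b-1,c)}(f,z)'$ on the unit circle, and then to translate the resulting positivity into the desired geometric conclusions in the same spirit as the proof of Theorem \ref{thm:new-positivity-starlike}. First I would compute
\begin{align*}
s_n^{(b-1,c)}(f,z)' = 1 + \sum_{k=1}^{n-1} \tfrac{B_{n-k-1}}{B_{n-1}}(k+1)a_{k+1}z^k,
\end{align*}
set $b_0=2$ and $b_k=\frac{B_{n-k-1}}{B_{n-1}}(k+1)a_{k+1}$ for $1\le k \le n-1$, and record the telescoping recursions $b_{k+1}/b_k = (c+n-k-2)(k+2)a_{k+2}/\bigl[(b+n-k-2)(k+1)a_{k+1}\bigr]$ for $1\le k\le n-3$ and the boundary relation $b_{n-1}=\frac{c}{1+b-c}\cdot\frac{na_n}{(n-1)a_{n-1}}b_{n-2}$, since these encode the difference between an interior step and the last step (which forces the separate hypothesis (iii)).

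Next I would verify the three monotonicity requirements of Corollary \ref{cor:new-alpha-beta-positive-sum} for the sequence $\{b_k\}$: namely $b_0/2\ge b_1$, $b_1/[(2+\alpha)^\lambda(2+\beta)^\mu]\ge b_2$, and $(k+\alpha)^\lambda(k+\beta)^\mu b_k\ge (k+1+\alpha)^\lambda(k+1+\beta)^\mu b_{k+1}$ for $2\le k\le n-2$. The first follows immediately from $(b+n-2)a_1\ge 2(c+n-2)a_2$. For the second, I would expand $(1+\alpha/2)^{-\lambda}(1+\beta/2)^{-\mu}$ as a product of binomial series and pair successive terms, obtaining the lower bound $(2-\alpha\lambda)(2-\beta\mu)/2^{\lambda+\mu+2}$ under the standing restrictions on $\alpha,\beta$ (each paired block is non-negative). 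Combined with hypothesis (i) this yields the inequality. The third is handled analogously by expanding $(1+1/(k+\alpha))^{-\lambda}(1+1/(k+\beta))^{-\mu}$ and pairing; for $k\ge 2$ the pairs are non-negative (this is where $(2+\lambda)/[3(k+\alpha)]\le 1$ is needed), which gives a lower bound of $(1-\lambda/(k+\alpha))(1-\mu/(k+\beta))$ and, together with hypothesis (ii), the required step for $2\le k\le n-3$. The case $k=n-2$ uses hypothesis (iii) together with the boundary value of $b_{n-1}$.

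With these monotonicity relations in hand, Corollary \ref{cor:new-alpha-beta-positive-sum} delivers
\begin{align*}
\tfrac{b_0}{2}+\sum_{k=1}^{n-1} b_k\cos k\theta>0\quad\text{and}\quad \sum_{k=1}^{n-1} b_k\sin k\theta>0,\qquad 0<\theta<\pi.
\end{align*}
The minimum principle for harmonic functions extends both positivities to $0\le r<1$, the reflection principle handles $\pi<\theta<2\pi$ for the sine sum, and together these say that $s_n^{(b-1,c)}(f,z)'$ is typically real with strictly positive real part in $\mathbb D$. Close-to-convexity with respect to $z$ is then immediate from $\operatorname{Re}s_n^{(b-1,c)}(f,z)'>0$, and close-to-convexity with respect to $z/(1-z)$ follows from the identity
\begin{align*}
\operatorname{Re}[(1-z)s_n^{(b-1,c)}(f,z)']=\operatorname{Re}(1-z)\operatorname{Re}s_n^{(b-1,c)}(f,z)'+\operatorname{Im}(z)\operatorname{Im}s_n^{(b-1,c)}(f,z)'>0.
\end{align*}
Starlikeness is obtained by applying Lemma \ref{lemma:saiful-2011-CAMWA-starlike} with $\gamma=0$, which requires exactly the two properties already established for $s_n^{(b-1,c)}(f,z)'$.

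The technical heart of the argument, and the step I expect to absorb most of the effort, is the grouped-pair bound for the binomial expansions in checking monotonicity condition (c): the bookkeeping to verify that every paired block in $(1+1/(k+\alpha))^{-\lambda}$ is non-negative under the stated restrictions on $\alpha,\lambda$ (and similarly for $\beta,\mu$), together with the sharpness needed at $k=2$, is delicate; the remaining arguments (minimum principle, reflection, and the Lemma \ref{lemma:saiful-2011-CAMWA-starlike} application) are standard once the positivity of the coefficient sums is secured.
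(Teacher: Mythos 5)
Your proposal is correct and follows essentially the same route as the paper's own proof: the same coefficients $b_k=\frac{B_{n-k-1}}{B_{n-1}}(k+1)a_{k+1}$, the same grouped-pair binomial expansions giving the lower bounds $(2-\alpha\lambda)(2-\beta\mu)/2^{\lambda+\mu+2}$ and $\bigl(1-\lambda/(k+\alpha)\bigr)\bigl(1-\mu/(k+\beta)\bigr)$, then Corollary \ref{cor:new-alpha-beta-positive-sum}, the minimum and reflection principles, and the identity for $\mathrm{Re}[(1-z)s_n^{(b-1,c)}(f,z)']$. Your explicit invocation of Lemma \ref{lemma:saiful-2011-CAMWA-starlike} with $\gamma=0$ for the starlikeness claim is a welcome addition, since the paper leaves that final step implicit.
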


For $b=1+\delta, c=1$, Theorem \ref{thm:close-to-convex-generalized-mean} leads to the following example.
\begin{example}
Let $\lambda\geq0$, $\mu\geq0$ such that $1\leq\lambda+\mu<2, 0\leq\alpha\leq\frac{6}{\lambda+4}$
and $0\leq \beta\leq \frac{6}{\beta+4}$ then
\begin{align*}
\delta\geq \max_{n\geq1} \left\{0,(n-2)\left(\frac{2^{\lambda+\mu+2}}{(2-\alpha\lambda)(2-\beta\mu)}-1 \right),
(n-3)\left( \frac{2(\lambda+\mu)+\alpha\mu+\beta\lambda+\lambda\mu}{(2+\alpha-\lambda)(2+\beta-\mu)} \right) \right\}
\end{align*}
Then $s_n^{\delta}(-\log{(1-z)},z)$ is close-to-convex with respect to $z$ and $z/(1-z)$.
Further for the same condition it is also starlike univalent.
\end{example}

\begin{remark}
If we take $\alpha=\beta=1$, and $\lambda=\mu=\frac{1}{2}$ then for $1\leq n\leq 3$ ,$s_n^{\delta}(-\log{(1-z)},z)$ is close-to-convex
with respect to $z$ and $z/(1-z)$ for $\delta\geq \delta'$ where $0<\delta'<3$. This conclusion cannot be obtained from \cite[Corollary 4.2]{saiful-swami-2011-CAMWA}.
\end{remark}

\begin{theorem}\label{thm:prestarlike-sa}
Let $\{a_k\}$ be a sequence of positive real numbers with $a_1=1$ and satisfy
the hypothesis of Theorem \ref{thm:close-to-convex-generalized-mean}. Then
$s_n^{(b-1,c)}(f,z)\in \mathcal{R}(\gamma)$, $\gamma\geq0$ where
\begin{align*}
\gamma \leq 1-\frac{(c+n-2)}{(b+n-2)}2a_2,
\end{align*}
$\mathcal{R}(\gamma)=\{f\in\mathcal{A}:\mathrm{Re} f'(z)>\gamma\}$
and $f(z)=z+\displaystyle\sum_{k=2}^{\infty}a_kz^k$, $z\in\mathbb{D}$.
\begin{proof}
Let $s_n^{(b-1,c)}(f,z)=z+\displaystyle\sum_{k=2}^n \frac{B_{n-k}}{B_{n-1}}a_kz^k$ where $B_0=1$ and $B_k=\frac{(b)_k}{(c)_k}\frac{1+b-c}{b},k\geq 1$.
\begin{align*}
s_n^{(b-1,c)}(f,z)'=1+\sum_{k=1}^{n-1} \frac{B_{n-k-1}}{B_{n-1}}(k+1)a_{k+1}z^k.
\end{align*}
We consider 
\begin{align*}
\frac{s_n^{(b-1,c)}(f,z)'-\gamma}{1-\gamma}=\frac{b_0}{2}+\sum_{k=1}^{n-1}b_kz^k,
\end{align*}
where $b_0=2$ and $b_k=\frac{B_{n-k-1}}{B_{n-1}}.\frac{(k+1)a_{k+1}}{(1-\gamma)}$ for $k\geq 1$.
Then $b_k$ and $b_{k+1}$ are related by
\begin{align*}
b_{k+1}=\frac{(c+n-k-2)(k+2)a_{k+2}}{(b+n-k-2)(k+1)a_{k+1}}b_k \hbox{\quad for $1\leq k\leq n-3$},
\end{align*}
and for $k=n-2$,
\begin{align*}
b_{n-1} &=\frac{(1+b-c)}{c}\frac{(n-1)a_{n-1}}{na_n}b_{n-2}.
\end{align*}
Using hypothesis we can easily get,
\begin{align*}
\frac{b_0}{2}-b_1=1-\left(\frac{c+n-2}{b+n-2}\right)\frac{2a_2}{1-\gamma}\geq0.
\end{align*}
The relation between the coefficients $b_k$ and $b_{k+1}$ is same as in the
Theorem \ref{thm:close-to-convex-generalized-mean}.
So such $b_k $ also satisfy the conditions of Theorem \ref{thm:close-to-convex-generalized-mean}
and from Corollary \ref{cor:new-alpha-beta-positive-sum}
we have the required result that
\begin{align*}
\frac{b_0}{2}+\sum_{k=1}^{n-1}b_k\cos{k\theta}>0 \quad \hbox{for $0<\theta<\pi$}.
\end{align*}
From the minimum principle for harmonic functions for $0\leq r<1$ and $0<\theta<2\pi$ we have
\begin{align*}
\mathrm {Re}\left( \frac{s_n^{(b-1,c)}(f,z)'-\gamma}{1-\gamma}\right)=\frac{b_0}{2}+\sum_{k=1}^{n-1}b_kr^k\cos{k\theta}>0.
\end{align*}
So, $s_n^{(b-1,c)}(f,z)\in\mathcal{R}(\gamma)$.
\end{proof}
\end{theorem}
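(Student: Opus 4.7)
The plan is to follow the same template used in the proof of Theorem \ref{thm:close-to-convex-generalized-mean}: rescale the derivative of $s_n^{(b-1,c)}(f,z)$ so that the problem reduces to showing positivity of a cosine partial sum, and then verify the hypotheses of Corollary \ref{cor:new-alpha-beta-positive-sum}. Specifically, I would set
\[
\frac{s_n^{(b-1,c)}(f,z)' - \gamma}{1-\gamma} \;=\; \frac{b_0}{2} + \sum_{k=1}^{n-1} b_k z^k,
\]
where $b_0 = 2$ and $b_k = \dfrac{B_{n-k-1}}{B_{n-1}}\cdot\dfrac{(k+1)a_{k+1}}{1-\gamma}$ for $k \geq 1$. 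The goal is then to show $\mathrm{Re}\bigl((s_n^{(b-1,c)}(f,z)' - \gamma)/(1-\gamma)\bigr) > 0$, which is equivalent to the required membership in $\mathcal{R}(\gamma)$.

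Next I would check the conditions of Corollary \ref{cor:new-alpha-beta-positive-sum} for this rescaled sequence. The very first condition $b_0/2 \geq b_1$ becomes $1 \geq \dfrac{c+n-2}{b+n-2}\cdot\dfrac{2a_2}{1-\gamma}$, which is precisely the content of the bound $\gamma \leq 1 - \dfrac{(c+n-2)}{(b+n-2)}2a_2$; this is the only place the value of $\gamma$ enters. For the remaining conditions, the multiplicative factor $1/(1-\gamma)$ is common to all $b_k$ with $k \geq 1$ and therefore cancels in every ratio $b_{k+1}/b_k$. Thus the recursions
\[
b_{k+1} \;=\; \frac{(c+n-k-2)(k+2)a_{k+2}}{(b+n-k-2)(k+1)a_{k+1}}\,b_k, \quad 1\leq k\leq n-3,
\quad b_{n-1} \;=\; \frac{1+b-c}{c}\cdot\frac{n a_n}{(n-1)a_{n-1}}\,b_{n-2},
\]
are formally identical to those appearing in the proof of Theorem \ref{thm:close-to-convex-generalized-mean}, so the hypotheses (i)--(iii) of that theorem give exactly the chain of inequalities needed here.

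With the hypotheses of Corollary \ref{cor:new-alpha-beta-positive-sum} verified, I conclude $\tfrac{b_0}{2} + \sum_{k=1}^{n-1} b_k\cos k\theta > 0$ for $\theta \in (0,\pi)$; the even symmetry in $\theta$ extends this to all $\theta \in (0,2\pi)\setminus\{\pi\}$, and the minimum principle for harmonic functions then propagates positivity from $|z|=1$ to $|z|<1$, yielding $\mathrm{Re}\,s_n^{(b-1,c)}(f,z)' > \gamma$ throughout $\mathbb{D}$.

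The main obstacle is essentially cosmetic rather than substantial: one must notice that the value of $\gamma$ appears only in the normalization of $b_1$ relative to $b_0/2$, and that the recursion inequalities among the $b_k$ with $k\geq 1$ are invariant under this normalization. Once that observation is made, Theorem \ref{thm:prestarlike-sa} is a direct corollary of the machinery already built up for Theorem \ref{thm:close-to-convex-generalized-mean}, with the cutoff $\gamma \leq 1 - \tfrac{c+n-2}{b+n-2}2a_2$ arising as the sharpest value for which the base-case inequality $b_0/2 \geq b_1$ still holds.
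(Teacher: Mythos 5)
Your proposal is correct and follows essentially the same route as the paper: the same normalization $(s_n^{(b-1,c)}(f,z)'-\gamma)/(1-\gamma)$ with $b_0=2$, $b_k=\frac{B_{n-k-1}}{B_{n-1}}\frac{(k+1)a_{k+1}}{1-\gamma}$, the same observation that $\gamma$ enters only through the base inequality $b_0/2\geq b_1$ (which yields the stated cutoff for $\gamma$) while the ratios $b_{k+1}/b_k$ are unchanged from Theorem \ref{thm:close-to-convex-generalized-mean}, followed by Corollary \ref{cor:new-alpha-beta-positive-sum} and the minimum principle for harmonic functions.
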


It can be clearly seen that for $\gamma=0$, Theorem \ref{thm:prestarlike-sa} coincides with
Theorem \ref{thm:close-to-convex-generalized-mean} for the case $g(z)=z$.

\begin{theorem}
Let $\{ a_k\}_{k=1}^{\infty}$ be a sequence of positive real numbers such that $a_1=1.$ If for
$\lambda\geq0,\mu\geq0$ such that $1\leq \lambda+\mu <2$ and
 $0\leq \alpha\leq \frac{6}{\lambda+4},0\leq\beta\leq\frac{6}{\mu+4}$, $a_k$ satisfy the following conditions:
 \begin{enumerate}
 \item $(3-2\lambda-2\mu)(b+n-2)a_1 \geq (5-2\lambda-2\mu)(c+n-2)a_2$
 \item $(2-\alpha\lambda)(2-\beta\mu)(5-2\lambda-2\mu)(b+n-3)a_2\geq 2^{\lambda+\mu+2}(7-2\lambda-2\mu)(c+n-3)a_3$
 \item $(2k+1-2\lambda-2\mu)(k-1+\alpha-\lambda)(k-1+\beta-\mu)(b+n-k-1)a_{k}\geq (2k+3-2\lambda-2\mu)(k-1+\alpha)
        (k-1+\beta)(c+n-k-1)a_{k+1} \quad \hbox{for $3 \leq k\leq n-2$}$
 \item $(n-2+\alpha-\lambda)(n-2+\beta-\mu)(2n+1-2\lambda-2\mu)(1+b-c)
 a_{n-1}\geq (n-2+\alpha)(n-2+\beta)(2n+3-2\lambda-2\mu)c a_n$,
 \end{enumerate}
 then, $s_n^{(b-1,c)}(f,z)\in \mathcal{S}^{\ast}(\lambda+\mu-1/2)$, where $f(z)=z+\displaystyle\sum_{k=2}^{\infty}a_kz^k$,
 $z\in\mathbb{D}$.
 \end{theorem}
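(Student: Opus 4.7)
The plan is to apply Lemma \ref{lemma:saiful-2011-CAMWA-starlike} with $\gamma := \lambda + \mu - 1/2 \in [1/2, 1)$. Accordingly, one must verify that both $s_n^{(b-1,c)}(f,z)'$ and
\[
h_n(z) := s_n^{(b-1,c)}(f,z)' - \gamma \, s_n^{(b-1,c)}(f,z)/z
\]
have positive real part on $\mathbb{D}$ and are typically real. Both tasks will reduce, via Corollary \ref{cor:new-alpha-beta-positive-sum}, to checking suitable monotonicity chains on the Taylor coefficients.

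Writing $h_n(z) = b_0/2 + \sum_{k=1}^{n-1} b_k z^k$, a direct computation yields $b_0 = 2(1-\gamma) = 3-2\lambda-2\mu$ and
\[
b_k = (k+1-\gamma)\,\frac{B_{n-k-1}}{B_{n-1}}\,a_{k+1} = \frac{2k+3-2\lambda-2\mu}{2}\,\frac{B_{n-k-1}}{B_{n-1}}\,a_{k+1}
\]
for $1 \leq k \leq n-1$, which explains the appearance of the linear factors $(2k+1-2\lambda-2\mu)$ and $(2k+3-2\lambda-2\mu)$ in the hypotheses. The consecutive ratio $B_{n-k-2}/B_{n-k-1}$ equals $(c+n-k-2)/(b+n-k-2)$ for $1 \leq k \leq n-3$ but becomes $c/(1+b-c)$ at $k = n-2$; this is why the generic step and the final step are stated separately as conditions (3) and (4).

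The chain required by Corollary \ref{cor:new-alpha-beta-positive-sum} is then read off directly. Condition (1) is exactly $b_0/2 \geq b_1$. Condition (2), combined with the elementary estimate $(2+\alpha)^{-\lambda}(2+\beta)^{-\mu} \geq 2^{-(\lambda+\mu+2)}(2-\alpha\lambda)(2-\beta\mu)$ (valid for $0 \leq \alpha \leq 6/(\lambda+4)$ and $0 \leq \beta \leq 6/(\mu+4)$, as established in the proof of Theorem \ref{thm:close-to-convex-generalized-mean}), delivers $b_1/[(2+\alpha)^\lambda(2+\beta)^\mu] \geq b_2$. Conditions (3) and (4) then handle the interior and boundary steps of the chain for $k \geq 2$ via the companion estimate $(1+(k+\alpha)^{-1})^{-\lambda}(1+(k+\beta)^{-1})^{-\mu} \geq (1-\lambda/(k+\alpha))(1-\mu/(k+\beta))$, also proved in Theorem \ref{thm:close-to-convex-generalized-mean}. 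Corollary \ref{cor:new-alpha-beta-positive-sum} then yields $\mathrm{Re}\, h_n(e^{i\theta}) > 0$ and $\mathrm{Im}\, h_n(e^{i\theta}) > 0$ for $\theta \in (0,\pi)$, and the minimum principle for harmonic functions together with the reflection principle upgrades this to $\mathrm{Re}\, h_n(z) > 0$ on $\mathbb{D}$ and typical-realness of $h_n$, exactly as in the proof of Theorem \ref{thm:close-to-convex-generalized-mean}.

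The analogous statement for $s_n^{(b-1,c)}(f,z)'$ comes for free. Its coefficients are $\tilde b_k = (k+1) B_{n-k-1}/B_{n-1} \cdot a_{k+1}$ with $\tilde b_0 = 2$, and since $(k+1-\gamma)/(k+2-\gamma) < (k+1)/(k+2)$ for $\gamma \in (0,1)$, every chain inequality holding for $\{b_k\}$ holds a fortiori for $\{\tilde b_k\}$ --- the same observation used in the proof of Theorem \ref{thm:new-positivity-starlike}. Hence $\mathrm{Re}\, s_n^{(b-1,c)}(f,z)' > 0$ and $s_n^{(b-1,c)}(f,z)'$ is typically real, and Lemma \ref{lemma:saiful-2011-CAMWA-starlike} finishes the proof. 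The main obstacle is pure bookkeeping: correctly aligning the four numbered hypotheses with the chain steps (the index shift $k \mapsto k-1$ hidden in condition (3) is the easiest point to slip on) and isolating the atypical boundary step at $k = n-2$ where $B_1 = (1+b-c)/c$ replaces the generic ratio $B_{n-k-2}/B_{n-k-1}$.
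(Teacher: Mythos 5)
Your proposal is correct and follows essentially the same route as the paper: the paper simply verifies the hypotheses of Theorem \ref{thm:new-positivity-starlike} for the coefficients $b_k=\frac{B_{n-k}}{B_{n-1}}a_k$ with $\gamma=\lambda+\mu-1/2$, which is exactly your argument with Theorem \ref{thm:new-positivity-starlike} left unexpanded (your version unrolls it into Lemma \ref{lemma:saiful-2011-CAMWA-starlike} plus Corollary \ref{cor:new-alpha-beta-positive-sum}, using the same two elementary estimates from the proof of Theorem \ref{thm:close-to-convex-generalized-mean}). Your bookkeeping of the chain steps, the index shift in condition (3), and the boundary ratio $c/(1+b-c)$ at $k=n-2$ all match the paper's computation.
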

\begin{proof}
$s_n^{(b-1,c)}(f,z)=z+\displaystyle\sum_{k=2}^n \frac{B_{n-k}}{B_{n-1}}a_k z^k
=b_1z+\displaystyle\sum_{k=2}^n b_kz^k$,
where $b_1=1$ and $b_k=\frac{B_{n-k}}{B_{n-1}}a_k$ for $k\geq 2$. Then,
\begin{align*}
b_{k+1}=\left(\frac{c+n-k-1}{b+n-k-1}\right)\frac{a_{k+1}}{a_k}b_k, \hbox{\quad for $2\leq k\leq n-2$},
\end{align*}
and for $k=n-1$, $b_n=\frac{c}{(1+b-c)}\frac{a_n}{a_{n-1}}b_{n-1}$.
It is enough to prove that $\{b_k\}$ satisfy the conditions of Theorem \ref{thm:new-positivity-starlike}.
For the sake of convenience we substitute $\gamma=\lambda+\mu-1/2$.
By a simple calculation we can get that $(1-\gamma)b_1-(2-\gamma)b_2\geq0$.
Now
\begin{align*}
&\frac{1}{(2+\alpha)^{\lambda}(2+\beta)^{\mu}}(2-\gamma)b_2-(3-\gamma)b_3\\
&\geq \frac{(2-\alpha\lambda)(2-\beta\mu)}{2^{\lambda+\mu+2}}(5-2\lambda-2\mu)b_2-
 (7-2\lambda-2\mu)\frac{(c+n-3)}{(b+n-3)}\frac{a_3}{a_2}b_2\geq0.
\end{align*}
Now for $2 \leq k\leq n-3$,
\begin{align*}
&\left(1+\frac{1}{k+\alpha}\right)^{-\lambda}\left(1+\frac{1}{k+\beta}\right)^{-\mu}
(k+1-\gamma)b_{k+1}-(k+2-\gamma)b_{k+2}\\
&\geq \left(1-\frac{\lambda}{k+\alpha}\right)\left(1-\frac{\mu}{k+\beta}\right)(2k+3-2\lambda-2\mu)b_{k+1}
-(2k+5-2\lambda-2\mu)\\
&\quad\quad\left(\frac{c+n-k-2}{b+n-k-2}\right)\frac{a_{k+2}}{a_{k+1}}b_{k+1}\geq0.
\end{align*}
and for $k=n-2$, using the hypothesis we obtain,
\begin{align*}
\left(1+\frac{1}{n-2+\alpha}\right)^{-\lambda}\left(1+\frac{1}{n-2+\beta}\right)^{-\mu}(2n+1-2\lambda-2\mu)b_{n-1}\\
-(2n+3-2\lambda-2\mu)b_n\geq0.
\end{align*}
From Theorem \ref{thm:new-positivity-starlike} the desired result follows.
\end{proof}

\begin{theorem}\label{thm:sigma-b-c-prestarlike}
Let $b\geq c>0$, $0\leq \alpha\leq
\frac{6}{\lambda+4},0\leq \beta\leq \frac{6}{\mu+4}$ and $\lambda,\mu\geq0$ such that
$1\leq\lambda+\mu<2 $ and satisfies the following conditions:
\begin{enumerate}
\item $2(2-\gamma)(c+n-2)\leq (b+n-2)$,
\item $(2+\alpha)^{\lambda}(2+\beta)^{\mu}(3-\gamma)(3-2\gamma)(c+n-3)
\leq 2(2-\gamma)(b+n-3)$,
\item $(k+1+\alpha)^{\lambda}(k+1+\beta)^{\mu}(k+2-\gamma)(k+2-2\gamma)(c+n-k-2)
\leq (k+\alpha)^{\lambda}(k+\beta)^{\mu}(k+1-\gamma)(k+1)(b+n-k-2)$ for $2\leq k\leq n-3$,
\item $(n-1+\alpha)^{\lambda}(n-1+\beta)^{\mu}(n-\gamma)(n-2\gamma)c\leq
(n-2+\alpha)^{\lambda}(n-2+\beta)^{\mu}(n-1-\gamma)(n-1)(1+b-c)$.
\end{enumerate}
Then $s_n^{(b-1,c)}(z)$ is prestarlike of order $\gamma$, where $0\leq \gamma<1$.
\end{theorem}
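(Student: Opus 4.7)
My plan is to apply Theorem~\ref{thm:prestarlike} directly to the polynomial $s_n^{(b-1,c)}(z) = z + \sum_{k=2}^{n} \frac{B_{n-k}}{B_{n-1}} z^k$, treating its Taylor coefficients as the sequence $a_1 = 1$ and $a_k = B_{n-k}/B_{n-1}$ for $2 \leq k \leq n$ (and $a_k = 0$ for $k > n$). Because Theorem~\ref{thm:prestarlike} demands two families of coefficient inequalities written in terms of consecutive ratios, the entire argument reduces to a careful bookkeeping of the ratios $a_{k+1}/a_k$ in the parameters $b$ and $c$.

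First I would compute these ratios. From $B_0 = 1$ and $B_k = \frac{1+b-c}{b}\frac{(b)_k}{(c)_k}$ for $k \geq 1$, a direct check gives $B_{j+1}/B_j = (b+j)/(c+j)$ whenever $j \geq 1$, while the boundary ratio is $B_1/B_0 = (1+b-c)/c$. Consequently
\begin{align*}
\frac{a_{k+1}}{a_k} \;=\; \frac{B_{n-k-1}}{B_{n-k}} \;=\; \frac{c+n-k-1}{b+n-k-1} \qquad (1 \leq k \leq n-2),
\end{align*}
whereas the terminal index $k = n-1$ yields $a_n/a_{n-1} = c/(1+b-c)$. This dichotomy is precisely what forces the separate condition (4) in the statement, since the asymmetric factor $(1+b-c)$ replaces a generic $b+j$ at the very last step.

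Second I would translate. Substituting the computed ratios, the hypothesis $2(2-\gamma) a_2 \leq a_1$ of Theorem~\ref{thm:prestarlike} becomes condition (1), and the companion $(2+\alpha)^{\lambda}(2+\beta)^{\mu}(3-\gamma)(3-2\gamma) a_3 \leq 2(2-\gamma) a_2$ becomes condition (2). For indices $2 \leq k \leq n-3$, where $a_{k+2}/a_{k+1} = (c+n-k-2)/(b+n-k-2)$, hypothesis (2) of Theorem~\ref{thm:prestarlike} rearranges to condition (3). Finally, for the terminal index $k = n-2$, using $a_n/a_{n-1} = c/(1+b-c)$ yields condition (4). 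Once all four cases are accounted for, Theorem~\ref{thm:prestarlike} applied to $s_n^{(b-1,c)}(z)$ gives prestarlikeness of order $\gamma$. The principal (and only) delicate point is the boundary index: one must be sure to isolate $k = n-2$ from the interior range so that the factor $(1+b-c)$ appears correctly, as the rest of the argument is a mechanical rewriting of the hypotheses in Theorem~\ref{thm:prestarlike}.
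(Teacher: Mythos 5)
Your proposal is correct and follows essentially the same route as the paper: the paper's proof is a one-line instruction to substitute $a_k = B_{n-k}/B_{n-1}$ into the earlier coefficient theorem (it cites Theorem~\ref{thm:new-positivity-starlike}, though the conditions clearly correspond to Theorem~\ref{thm:prestarlike}, which is what you use), and your ratio computations $a_{k+1}/a_k=(c+n-k-1)/(b+n-k-1)$ for $1\leq k\leq n-2$ and $a_n/a_{n-1}=c/(1+b-c)$ correctly reproduce conditions (1)--(4), including the boundary case responsible for the factor $1+b-c$. Nothing further is needed.
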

\begin{proof}
It is given that $s_n^{(b-1,c)}(z)=z+\displaystyle\sum_{k=2}^n \frac{B_{n-k}}{B_{n-1}}z^k
=z+\displaystyle\sum_{k=2}^n a_kz^k$, $z\in\mathbb{D}$. \\
Then using $a_k=\frac{B_{n-k}}{B_{n-1}}$ for $k\geq1$ in Theorem \ref{thm:new-positivity-starlike} and
following the same procedure the result can be proved.
\end{proof}

If $\gamma=0$ then $s_n^{(b-1,c)}(z)\in\mathcal{R}^{\ast}(0)=\mathcal{C}$.
Further if we substitute $b=1+\delta$ and $c=1$ in Theorem \ref{thm:sigma-b-c-prestarlike},
we have the following example.
\begin{example}\label{exp:prestarlike-cesaro}
If $\alpha,\beta,\lambda$ and $\mu$ satisfy the conditions of Theorem \ref{thm:sigma-b-c-prestarlike}
and if
\begin{align*}
\delta\geq\max\biggl\{ (n-1)(3-2\gamma), (n-2)\left(
\frac{(2+\alpha)^{\lambda}(2+\beta)^{\mu}(3-\gamma)(3-2\gamma)}{2(2-\gamma)}-1\right),\\
(n-3)\left( \frac{(3+\alpha)^{\lambda}(3+\beta)^{\mu}(4-\gamma)(4-2\gamma)}
{(2+\alpha)^{\lambda}(2+\beta)^{\mu}(3-\gamma)3}-1\right)\biggr\}.
\end{align*}
Then $s_n^{\delta}(z)$ is prestarlike of order $\gamma$, where $\gamma\in[0,1)$.
\end{example}
It can be noted that if we take $\alpha=\beta=0$ and $\lambda+\mu=1$ in Example \ref{exp:prestarlike-cesaro},
then for $\delta\geq (n-1)(3-2\gamma)$, $s_n^{\delta}(z)\in \mathcal{R}^{\ast}(\gamma)$.
Similar type of result had been found in \cite[Theorem 1]{ruscheweyh-1992-geom-cesaro-result-in-math}.
From \cite[Theorem 2.1]{ali-saiful-zero-free-2013}, we deduce the following corollary.
\begin{corollary}
If $\alpha,\beta,\lambda,\mu$ and $\gamma$ satisfy the hypothesis of Theorem $\mathrm{\ref{thm:sigma-b-c-prestarlike}}$
then for $b\geq c$, $s_n^{(b-1,c)}\in \mathcal{R}^{\ast}(\gamma)$. Then for any $zg\in\mathcal{K}(\gamma)$
$\Rightarrow g\ast(s_n^{(b-1,c)}(z))'$ is zero free in $\mathbb{D}$.
\end{corollary}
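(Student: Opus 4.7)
The plan is to combine two results that are already on the table by this point in the paper, so the proof should be quite short. The first sentence of the corollary is just a direct application of Theorem \ref{thm:sigma-b-c-prestarlike}: the entire hypothesis list (the range restrictions $b \geq c > 0$, $0 \leq \alpha \leq 6/(\lambda+4)$, $0 \leq \beta \leq 6/(\mu+4)$, $1 \leq \lambda + \mu < 2$, and the four coefficient inequalities (1)--(4)) is imported verbatim, so Theorem \ref{thm:sigma-b-c-prestarlike} yields $s_n^{(b-1,c)}(z) \in \mathcal{R}^{\ast}(\gamma)$ with no extra work.

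For the second sentence, I would invoke \cite[Theorem 2.1]{ali-saiful-zero-free-2013} as indicated in the sentence preceding the corollary. That result, when specialised to $F = s_n^{(b-1,c)}(z)$, says the following: whenever $F \in \mathcal{R}^{\ast}(\gamma)$ and $zg \in \mathcal{K}(\gamma)$, the convolution $g \ast F'$ is zero-free on $\mathbb{D}$. Applying this with $F = s_n^{(b-1,c)}(z)$ (whose prestarlikeness of order $\gamma$ was just established in the previous step) immediately gives that $g \ast (s_n^{(b-1,c)}(z))'$ does not vanish in $\mathbb{D}$, which is exactly the assertion.

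The only genuine point to check is that the class notations are compatible with those in \cite{ali-saiful-zero-free-2013}; namely that the symbol $\mathcal{R}^{\ast}(\gamma)$ used there coincides with our prestarlike class of order $\gamma$ and that $\mathcal{K}(\gamma)$ denotes close-to-convex functions of order $\gamma$ in the same normalisation. Since both conventions agree with the definitions fixed in Section~1, the conclusion follows at once; no further calculation, positivity estimate, or appeal to Corollary \ref{cor:new-alpha-beta-positive-sum} is needed, because the hard analytic work has already been absorbed into Theorem \ref{thm:sigma-b-c-prestarlike} and into the cited zero-free theorem.
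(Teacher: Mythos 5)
Your proposal is correct and matches the paper's intended argument exactly: the paper offers no separate proof, simply deducing the corollary by combining Theorem \ref{thm:sigma-b-c-prestarlike} (which gives $s_n^{(b-1,c)}\in\mathcal{R}^{\ast}(\gamma)$) with \cite[Theorem 2.1]{ali-saiful-zero-free-2013} for the zero-free conclusion. Your remark about checking that the class notations in the cited paper agree with those used here is the only point of substance, and it is resolved exactly as you say.
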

The following Lemma, which is the extension of the well-konwn Polya-Schoenberg Theorem is ingredient to our next result.
\begin{lemma}\cite[p. 499]{ruscheweyh-1977-prestarlike}
If $f\in \mathcal{K}(\gamma), g\in\mathcal{R}^{\ast}(\gamma),0\leq \gamma<1$
then $f\ast g\in \mathcal{K}(\gamma)$.
\end{lemma}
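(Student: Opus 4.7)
The plan is to derive the statement from Ruscheweyh's duality theory for convolutions, which is the standard pathway for all P\'olya--Schoenberg-type extensions to prestarlike classes. At its core, the argument reformulates close-to-convexity as a factorization $zf'=\phi h$ and then exploits the convolution identity $z(f\ast g)'=(zf')\ast g$ to reduce everything to a statement about a Hadamard quotient.

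First I would unpack the hypothesis $f\in\mathcal{K}(\gamma)$: by definition there exist $\phi\in\mathcal{S}^{\ast}(\gamma)$ and an analytic function $h$ on $\mathbb{D}$ with $\mathrm{Re}\,h(z)>0$ such that $zf'(z)=\phi(z)h(z)$. The convolution identity $z(f\ast g)'(z)=(zf'(z))\ast g(z)$ then gives $z(f\ast g)'(z)=((\phi h)\ast g)(z)$. Next, by the prestarlike version of the P\'olya--Schoenberg theorem, $\mathcal{S}^{\ast}(\gamma)\ast\mathcal{R}^{\ast}(\gamma)\subset\mathcal{S}^{\ast}(\gamma)$, so $\phi\ast g\in\mathcal{S}^{\ast}(\gamma)$. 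In particular $\phi\ast g$ is nonvanishing on $\mathbb{D}\setminus\{0\}$, which both identifies a natural starlike partner for $f\ast g$ and ensures that the quotient $z(f\ast g)'(z)/(\phi\ast g)(z)$ is a legitimate analytic function on $\mathbb{D}$.

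It then remains to show $\mathrm{Re}\,((\phi h)\ast g)(z)/(\phi\ast g)(z)>0$ on $\mathbb{D}$. This is the crux and is handled by Ruscheweyh's fundamental convex-hull theorem for convolutions: for every $\phi\in\mathcal{S}^{\ast}(\gamma)$, every $g\in\mathcal{R}^{\ast}(\gamma)$, and every analytic function $H$ on $\mathbb{D}$, one has
\[
\frac{((\phi H)\ast g)(z)}{(\phi\ast g)(z)}\in\overline{\mathrm{co}}(H(\mathbb{D})),\qquad z\in\mathbb{D},
\]
where $\overline{\mathrm{co}}$ denotes the closed convex hull. Applied with $H=h$, whose image lies in the right half-plane, this yields the required positivity at once, showing that $f\ast g$ is close-to-convex with respect to $\phi\ast g\in\mathcal{S}^{\ast}(\gamma)$ and therefore lies in $\mathcal{K}(\gamma)$.

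The hard part is of course the convex-hull theorem used in the last step: it rests on Ruscheweyh's duality principle, in which $\mathcal{R}^{\ast}(\gamma)$ is represented through its dual set of continuous linear functionals and membership questions are reduced to testing against a one-parameter family of probes. A fully self-contained proof would require setting up that duality framework and identifying the extreme points of the dual set of $\mathcal{R}^{\ast}(\gamma)$. For our purposes it is most economical to appeal to the theorem as established in Ruscheweyh's original paper; if a more elementary route were desired, the case $\gamma=0$ admits the classical Ruscheweyh--Sheil-Small argument via Hadamard products of functions with positive real part, and the general $\gamma$ can be recovered by a subordination shift.
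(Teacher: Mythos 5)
The paper does not prove this lemma at all; it is imported verbatim from Ruscheweyh's 1977 paper, and your argument is exactly the proof given there: write $zf'=\phi h$ with $\phi\in\mathcal{S}^{\ast}(\gamma)$ and $\operatorname{Re}h>0$, use $\mathcal{S}^{\ast}(\gamma)\ast\mathcal{R}^{\ast}(\gamma)\subset\mathcal{S}^{\ast}(\gamma)$ to get the starlike partner $\phi\ast g$, and invoke the convex-hull theorem $\bigl((\phi H)\ast g\bigr)/(\phi\ast g)\in\overline{\mathrm{co}}\,(H(\mathbb{D}))$. This is correct as a sketch, with the substantive content deferred (legitimately) to the duality/convex-hull theorem; the only detail worth adding is that the rotation $e^{i\eta}$ and shift $-\mu$ appearing in the paper's definition of close-to-convexity cause no trouble, since the corresponding image of $h$ is still contained in a half-plane, hence convex.
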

Clearly, $s_n^{(b-1,c)}(g,z)\in \mathcal{K}(\gamma)$ if $g\in\mathcal{K}(\gamma)$.

\begin{theorem}
Let $\{a_k\}$ be a sequence of positive real numbers such that $a_1=1$.
Then  for $0\leq\alpha\leq\frac{6}{\lambda+4},0\leq \beta\leq \frac{6}{\mu+4}$ and
$\lambda\geq0,\mu\geq0$ such that $1\leq\lambda+\mu<2$. If $\{a_k\}$ satisfy the
following conditions:
\begin{enumerate}
\item $(2-\alpha\lambda)(2-\beta\mu)(b+n-2)a_1\geq (c+n-2)2^{\lambda+\mu+3}a_2$,
\item $k(k+\alpha-\lambda)(k+\beta-\mu)(b+n-k-1)a_k\geq (k+\alpha)(k+\beta)(c+n-k-1)(k+1)a_{k+1}$, for all $2 \leq k\leq n-2$,
\item $(n-1+\alpha-\lambda)(n-1+\beta-\mu)(1+b-c)(n-1)a_{n-1}\geq c(n-1+\alpha)(n-1+\beta)na_n$.
\end{enumerate}
Then $s_n^{(b-1,c)}(f,z)$ is close to convex with respect to starlike
function $z/(1-z^2)$ where $f(z)=z+\displaystyle\sum_{k=2}^{\infty}a_kz^k$, $z\in\mathbb{D}$.
\end{theorem}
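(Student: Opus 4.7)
The plan is to reduce the claim to the positivity of a single sine polynomial, and then hand that positivity to Corollary~\ref{cor:new-alpha-beta-positive-sum}, exactly in the spirit of Theorem~\ref{thm:new-positivity-close-to-convex}. Writing $s_n^{(b-1,c)}(f,z)=z+\sum_{k=2}^{n}c_k z^k$ with $c_k=(B_{n-k}/B_{n-1})a_k$, the desired $\mathrm{Re}((1-z^2)\,s_n^{(b-1,c)}(f,z)')>0$ is equivalent to $z\,s_n^{(b-1,c)}(f,z)'$ being typically real in $\mathbb{D}$; since the coefficients are real this reduces in turn to
\begin{align*}
\sum_{k=1}^{n} k\,c_k\, r^k \sin k\theta >0,\qquad 0<\theta<\pi,\ 0\le r<1,
\end{align*}
with the convention $c_1=1$. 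Thus it suffices to check that the sequence $d_k:=k c_k$ satisfies the chain $(2+\alpha)^{\lambda}(2+\beta)^{\mu}d_2\le d_1$ together with $(k+1+\alpha)^{\lambda}(k+1+\beta)^{\mu}d_{k+1}\le (k+\alpha)^{\lambda}(k+\beta)^{\mu}d_k$ for every $k\ge 2$, which is the hypothesis of Corollary~\ref{cor:new-alpha-beta-positive-sum}.

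To translate the hypotheses of the theorem into that chain, I would compute the coefficient ratios
\begin{align*}
\frac{d_{k+1}}{d_k}=\frac{k+1}{k}\cdot\frac{c+n-k-1}{b+n-k-1}\cdot\frac{a_{k+1}}{a_k}\ (1\le k\le n-2),\qquad
\frac{d_n}{d_{n-1}}=\frac{n}{n-1}\cdot\frac{c}{1+b-c}\cdot\frac{a_n}{a_{n-1}},
\end{align*}
the latter reflecting that $B_0/B_1=c/(1+b-c)$ deviates from the generic ratio $(c+n-k-1)/(b+n-k-1)$. I would then import from the proof of Theorem~\ref{thm:close-to-convex-generalized-mean} the two elementary estimates
\begin{align*}
\frac{1}{(2+\alpha)^{\lambda}(2+\beta)^{\mu}}\ge\frac{(2-\alpha\lambda)(2-\beta\mu)}{2^{\lambda+\mu+2}},\qquad
\left(1+\tfrac{1}{k+\alpha}\right)^{-\lambda}\left(1+\tfrac{1}{k+\beta}\right)^{-\mu}\ge\left(1-\tfrac{\lambda}{k+\alpha}\right)\left(1-\tfrac{\mu}{k+\beta}\right),
\end{align*}
both valid under $0\le\alpha\le 6/(\lambda+4)$ and $0\le\beta\le 6/(\mu+4)$ (the second one additionally needing $k\ge 2$).

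Armed with these, hypothesis~(1) immediately yields $(2+\alpha)^{\lambda}(2+\beta)^{\mu}\cdot 2c_2\le 1$; hypothesis~(2), after clearing denominators, is exactly $d_{k+1}/d_k\le (1-\lambda/(k+\alpha))(1-\mu/(k+\beta))$ for $2\le k\le n-2$; and hypothesis~(3) is the analogous bound at the boundary index $k=n-1$ with the modified Pochhammer ratio. Corollary~\ref{cor:new-alpha-beta-positive-sum} then delivers the sine positivity on $(0,\pi)$, the minimum principle promotes it to $0\le r<1$, and the reflection principle produces the opposite sign on $(-\pi,0)$, so $z\,s_n^{(b-1,c)}(f,z)'$ is typically real and close-to-convexity with respect to $z/(1-z^2)$ follows. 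The main obstacle, beyond bookkeeping, is the boundary step $k=n-1$: because the coefficient ratio $B_{n-k-1}/B_{n-k}$ collapses from the generic $(c+n-k-1)/(b+n-k-1)$ to $c/(1+b-c)$ when $k=n-1$, this transition must be treated separately, and condition~(3) has been crafted precisely to absorb that discrepancy.
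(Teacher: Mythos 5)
Your proposal is correct and follows essentially the same route as the paper: both reduce close-to-convexity with respect to $z/(1-z^2)$ to typical realness of $z\,s_n^{(b-1,c)}(f,z)'$, apply Corollary~\ref{cor:new-alpha-beta-positive-sum} to the coefficients $kc_k$ via the two binomial-series lower bounds, and treat the boundary index $k=n-1$ separately because $B_0/B_1=c/(1+b-c)$ breaks the generic Pochhammer ratio. No gaps to report.
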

\begin{proof}
$s_n^{(b-1,c)}(f,z)=z+\displaystyle\sum_{k=2}^n \frac{B_{n-k}}{B_{n-1}}a_kz^k$ is close
to convex with respect to $z/(1-z^2)$ if $zs_n^{(b-1,c)}(f,z)'$ is typically real function.
Consider
\begin{align*}
zs_n^{(b-1,c)}(f,z)'=z+\sum_{k=2}^n \frac{B_{n-k}}{B_{n-1}}ka_kz^k=b_1 z+\sum_{k=2}^n b_kz^k,
\end{align*}
where $b_1=1$ and $b_k=\frac{B_{n-k}}{B_{n-1}}ka_k$ for $k\geq 2$. Clearly
\begin{align*}
b_{k+1}=\frac{B_{n-k-1}}{B_{n-1}}(k+1)a_{k+1}
\Rightarrow
\left\{
  \begin{array}{ll}
    b_{k+1}=\frac{(c+n-k-1)}{(b+n-k-1)}\frac{(k+1)a_{k+1}}{k a_k}b_k, & \hbox{forall $1\leq k\leq n-2$;} \\
    b_n=\frac{c}{1+b-c}\frac{na_n}{(n-1)a_{n-1}}b_{n-1}, & \hbox{$k=n-1$.}
  \end{array}
\right.
\end{align*}
Now,
\begin{align*}
\frac{1}{(2+\alpha)^{\lambda}(2+\beta)^{\mu}}b_1-b_2
&\geq \frac{1}{2^{\lambda+\mu+2}}(2-\alpha\lambda)(2-\beta\mu)b_1-\left(\frac{c+n-2}{b+n-2}\right)\frac{2a_2}{a_1}b_1\\
&=\frac{b_1}{2^{\lambda+\mu+2}}\left[(2-\alpha\lambda)(2-\beta\mu)
- \left(\frac{c+n-2}{b+n-2}\right)\frac{2^{\lambda+\mu+3}a_2}{a_1}\right]\\
&\geq 0.
\end{align*}
Further, for $2\leq k\leq n-2$,
\begin{align*}
&\frac{(k+\alpha)^{\lambda}(k+\beta)^{\mu}}{(k+1+\alpha)^{\lambda}(k+1+\beta)^{\mu}}b_k-b_{k+1}\\
&\geq\left(1-\frac{\lambda}{k+\alpha}\right)\left(1-\frac{\mu}{k+\beta}\right)b_k-
\frac{(c+n-k-1)}{(b+n-k-1)}\frac{(k+1)a_{k+1}}{k a_k}b_k\\
&=\frac{b_k}{(k+\alpha)(k+\beta)}\bigg[(k+\alpha-\lambda)(k+\beta-\mu)\\
&\quad \quad -(k+\alpha)(k+\beta)
\frac{(c+n-k-1)}{(b+n-k-1)}\frac{(k+1)a_{k+1}}{k a_k} \bigg]\geq0.
\end{align*}
For $k=n-1$,
\begin{align*}
& \left(1+\frac{1}{n-1+\alpha}\right)^{-\lambda}\left(1+\frac{1}{n-1+\beta}\right)^{-\mu}b_{n-1}-b_n\\
& \geq \left( 1-\frac{\lambda}{n-1+\alpha}\right)\left(1-\frac{\mu}{n-1+\beta} \right)b_{n-1}-
\frac{c}{(1+b-c)}\frac{na_n}{(n-1)a_{n-1}}b_{n-1},
\end{align*}
which is non-negative. Following the same argument as in Theorem \ref{thm:new-positivity-starlike} ,
$zs_n^{(b-1,c)}(f,z)$ is typically real which completes the proof.
\end{proof}

\begin{remark}
Note that, we have no result for the close-to-convexity of $s_n^{(b-1,c)}(f,z)$
with respect to the starlike function $z/(1-z)^2$ and $z/(1-z+z^2)$. Although
there are not many results in the literature for close-to-convexity with respect
to $z/(1-z+z^2)$, it will be
interesting if one can find the results in this direction.
\end{remark}

\textbf{Acknowledgement:}{ The first author is thankful to the
"Council of Scientific and Industrial Research, India"
(grant code: 09/143(0827)/2013-EMR-1) for financial support to carry out
the above research work.
}

\end{document}